\newtheorem{thm}{Theorem}[section]
\newtheorem{lem}[thm]{Lemma}
\newtheorem{cor}[thm]{Corollary}
\theoremstyle{definition}
\newtheorem{Def}[thm]{Definition}
\newtheorem*{LBT}{Lower Bound Theorem}
\newtheorem{que}[thm]{Question}
\newcommand{\kk}{\mathbf{k}}
\newcommand{\ZZ}{\mathcal {Z}}
\newcommand{\Ss}{\mathcal {S}}
\newcommand{\VV}{\mathcal {V}}
\newcommand{\MM}{\mathcal {M}}
\def\w{\widetilde}
\def\b{\w}
\begin{document}
\title[$B$-Rigidity of flag $2$-spheres without $4$-belt]{$B$-Rigidity of flag $2$-spheres without $4$-belt}
\author[F.~Fan, J.~Ma \& X.~Wang]{Feifei Fan, Jun Ma and Xiangjun Wang}
\thanks{The first and third authors are supported by the National Natural Science Foundation of China (NSFC no. 11471167)}
\address{Feifei Fan, Academy of Mathematics and Systems Science, Chinese Academy of sciences, Beijing 100190, P.~R.~China}
\email{fanfeifei@mail.nankai.edu.cn}
\address{Jun Ma, School of Mathematical Sciences, Fudan University, Shanghai 200433, P.~R.~China}
\email{tonglunlun@gmail.com}
\address{Xiangjun Wang, School of Mathematical Sciences, Nankai University, Tianjin 300071, P.~R.~China}
\email{xjwang@nankai.edu.cn}
\subjclass[2010]{05A19, 05E40, 13F55, 52B05, 52B10.}
\maketitle
\begin{abstract}
Associated to every finite simplicial complex $K$, there is a
moment-angle complex $\mathcal {Z}_{K}$. In this paper, we use some algebraic invariants to solve the $B$-rigidity problem for some special simplicial compelexes.
\end{abstract}
\section{Introduction}
In this paper, we assume $K$ is a simplcial complex with vertex set $[m]=\{v_1,\dots,v_m\}$, and $\kk$ is a field unless otherwise stated. Denote by $MF(K)$ the set of all missing faces of $K$.

Associated to every finite simplicial complex $K$, there is a
moment-angle complex $\mathcal {Z}_{K}$. The study of the moment-angle complexes is one of the main problems in toric topology, and one of the main tool to study this object is the cohomology of $\mathcal {Z}_{K}$. Around its cohomology, there is a classical problem proposed by Buchstaber in his lecture note \cite{B08}.
\begin{que}
Let $K_1$ and $K_2$ be two simplicial complexes, and let $\mathcal {Z}_{K_1}$ and
$\mathcal {Z}_{K_2}$ be their respective moment-angle complexes. When a graded ring
isomorphism $H^*(\mathcal {Z}_{K_1};\kk)\cong H^*(\mathcal {Z}_{K_2};\kk)$
implies a combinatorial equivalence $K_1\approx  K_2$?
\end{que}
Let us call the simplicial complex giving the positive answer to the
question \emph{$B$-rigid over $\kk$} (if $\kk=\mathbb{Z}$, simply refer to it as \emph{$B$-rigid}).

Because the topology of a moment-angel complex $\mathcal {Z}_{K}$ is uniquely determined by its underling simplicial complex $K$, the answer to this question can be a guideline for the following important problem in toric topology.

\begin{que}\label{que:1}
Suppose $\mathcal {Z}_{K_1}$ and $\mathcal {Z}_{K_2}$ are two moment-angle manifolds such that
\[H^*(\mathcal {Z}_{K_1})\cong H^*(\mathcal {Z}_{K_2})\] as graded rings. Are $\mathcal {Z}_{K_1}$ and $\mathcal {Z}_{K_1}$ homeomorphic?
\end{que}
In this paper we study the $B$-rigidity of a special kind of simplicial complexes: \emph{flag $2$-spheres without $4$-belt} by following the technique in \cite[\S7]{FW}.

\section{Definitions}
\begin{Def}
Given a subset $I\subseteq \mathcal [m]$, define $K_I\subseteq K$ to be the \emph{full sub-complex} of $K$ consisting of all simplices of $K$
which have all of their vertices in $I$, that is
\[K_I:=\{\sigma\cap I\mid \sigma\in K\}.\]
\end{Def}
\begin{Def}
A full subcomplex $K_I$ of $K$ is called an $n$-belt of $K$, if $K_I$ is a triangulation of $S^1$ with $n$ vertices.
\end{Def}
\begin{Def}
The \emph{face ring} (also known as the \emph{Stanley-Reisner ring}) of a simplicial complex $K$ is the quotient ring
\[\kk({K}):=\kk[v_1,\dots\,v_m]/\mathcal {I}_{K}\]
where $\mathcal {I}_K$ is the ideal generated by the monomials $v_{i_1}\cdots v_{i_s}$ for
which $\{v_{i_1},\dots,v_{i_s}\}$ does not span a simplex of $K$.
\end{Def}
To calculate the cohomology of $\mathcal {Z}_K$, Buchstaber and Panov \cite{BP02} proved the following
\begin{thm}[Buchstaber-Panov, {\cite[Theorem 4.7]{P08}}]
\begin{align*}
H^*(\mathcal {Z}_{K}; \kk)\cong \mathrm{Tor}^{*,\,*}_{\kk[m]}(\kk(K),\kk)\cong\bigoplus_{I\subseteq [m]} \w {H}^*(K_I;\kk)
\end{align*}
\end{thm}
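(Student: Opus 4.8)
The plan is to combine an explicit cellular model for $\ZZ_K$ with a Koszul-complex computation and a $\mathbb{Z}^m$-multigrading argument, treating the two isomorphisms of the statement in turn. First I would realize the moment-angle complex as the subcomplex $\ZZ_K=\bigcup_{\sigma\in K}\big(\prod_{i\in\sigma}D^2\times\prod_{i\notin\sigma}S^1\big)$ of the polydisc $(D^2)^m$, give each factor $D^2$ the CW structure with one cell $e^0_i,e^1_i,e^2_i$ in each dimension (so that $S^1=e^0_i\cup e^1_i$), and take the product cell structure. The cells of $(D^2)^m$ lying in $\ZZ_K$ are indexed by pairs $(J,L)$ with $J\cap L=\varnothing$ and $J\in K$, the cell $(J,L)$ having dimension $2|J|+|L|$. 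Reading off the incidence numbers from the single attaching map $\partial e^2_i\to e^1_i$, which has degree one, and choosing a cellular approximation to the diagonal, one identifies the cellular cochain algebra $C^*(\ZZ_K;\kk)$ with
\[
R^*(K):=\big(\Lambda[u_1,\dots,u_m]\otimes\kk(K)\big)/(v_i^2,\ u_iv_i),\qquad \deg u_i=1,\ \deg v_i=2,\ du_i=v_i,\ dv_i=0,
\]
a differential graded algebra whose additive basis consists of the squarefree monomials $v_Ju_L$ with $J\cap L=\varnothing$ and $J\in K$.

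\textbf{The Koszul model and the multigrading.} Since $\Lambda[u_1,\dots,u_m]\otimes\kk[m]$ with $du_i=v_i$ is the Koszul resolution of $\kk$ over $\kk[m]$, the group $\mathrm{Tor}^{*,*}_{\kk[m]}(\kk(K),\kk)$ is by definition the cohomology of $\kk(K)\otimes_{\kk[m]}\big(\Lambda[u]\otimes\kk[m]\big)=\Lambda[u]\otimes\kk(K)$ with differential $u_S\otimes x\mapsto\sum_{i\in S}\pm\, u_{S\setminus i}\otimes v_ix$. Both this complex and $R^*(K)$ carry a $\mathbb{Z}^m$-grading, with $u_i$ and $v_i$ in multidegree $\mathbf e_i$, and $R^*(K)$ is supported in the squarefree multidegrees $\mathbf e_I=\sum_{i\in I}\mathbf e_i$, $I\subseteq[m]$. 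The essential point is that the natural surjection of differential graded algebras $\Lambda[u]\otimes\kk(K)\twoheadrightarrow R^*(K)$ is a quasi-isomorphism: in a squarefree multidegree $\mathbf e_I$ it is already an isomorphism, whereas in a multidegree $\mathbf a$ with $a_{i_0}\ge 2$ for some $i_0$ the target vanishes and the source is acyclic, a contracting homotopy being $u_S\otimes v^{\mathbf b}\mapsto\pm\, u_{S\cup i_0}\otimes v^{\mathbf b-\mathbf e_{i_0}}$ (with value $0$ when $i_0\in S$). Hence $H^*(\ZZ_K;\kk)\cong H^*(R^*(K))\cong H^*(\Lambda[u]\otimes\kk(K))=\mathrm{Tor}^{*,*}_{\kk[m]}(\kk(K),\kk)$ as graded rings, the single cohomological grading matching the total degree of the bigraded $\mathrm{Tor}$. (Alternatively, the middle isomorphism is the collapse at $E_2$ of the Eilenberg--Moore spectral sequence of the fibration with fiber $\ZZ_K$, total space the Davis--Januszkiewicz space $DJ(K)$ and base $BT^m$, using $H^*(DJ(K);\kk)\cong\kk(K)$.)

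\textbf{Identifying the summands with full subcomplexes.} To finish I compute the cohomology of $R^*(K)$ one multidegree at a time. The summand $R^*(K)_{\mathbf e_I}$ has basis $\{v_Ju_{I\setminus J}:J\subseteq I,\ J\in K\}$, its differential sends $v_Ju_{I\setminus J}$ to $\sum\pm\, v_{J\cup i}u_{I\setminus(J\cup i)}$ over those $i\in I\setminus J$ with $J\cup i\in K$, and $\deg(v_Ju_{I\setminus J})=|I|+|J|$. Sending $v_Ju_{I\setminus J}$ to the simplicial cochain dual to the face $J$ of the full subcomplex $K_I$ --- the summand $J=\varnothing$ playing the role of the augmentation --- identifies $R^*(K)_{\mathbf e_I}$ with the augmented simplicial cochain complex of $K_I$ shifted up in degree by $|I|+1$, so $H^n(R^*(K)_{\mathbf e_I})\cong\w H^{n-|I|-1}(K_I;\kk)$. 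Summing over $I\subseteq[m]$ and using the isomorphisms of the previous paragraph gives $H^*(\ZZ_K;\kk)\cong\bigoplus_{I\subseteq[m]}\w H^*(K_I;\kk)$, the degree shifts being absorbed by the grading conventions left implicit in the statement.

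\textbf{The main obstacle.} Everything above except one point is routine bookkeeping; the point requiring genuine care is the acyclicity of the non-squarefree multigraded summands of $\Lambda[u]\otimes\kk(K)$ --- equivalently, that the surjection $\Lambda[u]\otimes\kk(K)\twoheadrightarrow R^*(K)$ is a quasi-isomorphism, equivalently the collapse of the Eilenberg--Moore spectral sequence. The contracting homotopy exhibited above should settle it, but one must pin down all the signs so that they are consistent with the Koszul sign rule governing the differential. I also anticipate a little effort in making precise the multiplicative part of the identification $C^*(\ZZ_K;\kk)\cong R^*(K)$ (choosing a cellular diagonal approximation on $\ZZ_K$) and in tracking the degree shifts through the multidegree decomposition.
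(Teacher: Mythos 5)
The paper does not give a proof of this theorem at all; it simply cites it as \cite[Theorem 4.7]{P08} and then, in the surrounding text, attributes the module-level Hochster decomposition to \cite{H75} and the ring structure on $\bigoplus_{I\subseteq[m]}\w H^*(K_I;\kk)$ to Baskakov \cite{B02} (with the sign correction from \cite{BP13} and \cite{FW}). So there is no proof in the paper to compare your argument against; the relevant comparison is with the standard proof in the literature.

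Against that benchmark your sketch is essentially correct and is the standard argument: the product-cell structure on $\ZZ_K\subset(D^2)^m$ giving the cellular cochain algebra $R^*(K)$, the Koszul resolution $\Lambda[u]\otimes\kk[m]$ of $\kk$ so that $\mathrm{Tor}_{\kk[m]}(\kk(K),\kk)=H^*(\Lambda[u]\otimes\kk(K))$, the $\mathbb{Z}^m$-multigrading argument showing $\Lambda[u]\otimes\kk(K)\twoheadrightarrow R^*(K)$ is a quasi-isomorphism (acyclicity in non-squarefree multidegrees via the contracting homotopy $u_S\otimes v^{\mathbf b}\mapsto\pm u_{S\cup i_0}\otimes v^{\mathbf b-\mathbf e_{i_0}}$), and the identification of the squarefree summand $R^*(K)_{\mathbf e_I}$ with the shifted augmented simplicial cochain complex of $K_I$. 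You have also correctly flagged the genuine delicacies: the sign bookkeeping in the contracting homotopy, and the multiplicative part of $C^*(\ZZ_K;\kk)\cong R^*(K)$ via a cellular diagonal approximation --- which is precisely the point the paper itself raises when it says Baskakov's formula holds only up to sign, corrected in \cite{BP13} and \cite{FW}. One small remark: the second isomorphism of the statement is initially only $\kk$-module level; to make it a ring isomorphism one must equip $\bigoplus_I\w H^*(K_I;\kk)$ with the Baskakov product induced by $K_{I\cup J}\hookrightarrow K_I*K_J$, and check that your per-multidegree identification intertwines the cup product on $R^*(K)$ with this product. Your sketch does not spell this out, but it is routine once the sign conventions are fixed, and the paper itself defers to \cite{B02,BP13,FW} for exactly this point.
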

The second isomorphism (as $\kk$-module) in the above theorem was firstly proved by Hochster \cite{H75}. Baskakov \cite{B02}
defined a natural multiplication structure on the $\kk$-module $\bigoplus_{I\subseteq [m]} \w {H}^*(K_I;\kk)$ so that it can be a ring isomorphism in the above theorem. 
It is induced by a canonical simplicial inclusion
\[\eta: K_{I\cup J}\hookrightarrow K_I*K_J,\quad I\cap J=\varnothing\]
and isomorphisms of reduced simplicial cochains
\begin{align*}
\mu :\w C^{p-1}(K_I;\kk)\otimes \w C^{q-1}(K_J;\kk)&\to \w C^{p+q-1}(K_I*K_J;\kk), \quad p,q\geq 0\\
\sigma\otimes\tau&\mapsto \sigma\cup \tau
\end{align*}
We call this ring the \emph{Baskakov-Hochster ring}.

Actually the formula given by Baskakov holds only up to a sign. Buchstaber and Panov indicated this defect and gave a correction in \cite{BP13}. 
F. Fan and X. Wang \cite[Theorem 2.12]{FW} gave a more explicit expression for this sign.

\begin{Def}
A simplicial complex $K$ is said to be \emph{Gorenstein* over $\kk$} if for any simplex $\sigma\in K$ (including $\sigma=\varnothing$)
\[\w H^i(\mathrm{link}_K\sigma;\kk)=
\begin{cases}
\kk&\quad\text{if}\  i=\mathrm{dim(link}_K\sigma);\\
0&\quad\text{otherwise.}
\end{cases}
\]
If $\kk=\mathbb{Z}$, then $K$ is simply called \emph{Gorenstein*}.
\end{Def}
It is easily verified that $K$ is Gorenstein iff $K$ is Gorenstein* over any field $\kk$. In particular, any simpicial sphere is Gorenstein*.
\begin{thm}[Avramov-Golod, {\cite[Theorem 3.4.5]{BH98}}]\label{thm:3}
An $(n-1)$-dimensional simplicial complex $K$ with $m$ vertices is Gorenstein* over a field $\kk$
if and only if $\mathrm{Tor}_{\kk[m]}(\kk(K),\kk)$
is a Poincar\'e algebra.
\end{thm}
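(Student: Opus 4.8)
The plan is to reduce the statement, through standard Stanley-Reisner theory, to the theorem of Avramov and Golod that characterizes Gorenstein rings by a Poincar\'e-duality property of their Koszul homology, and then to indicate how that theorem is proved. Set $S=\kk[m]=\kk[v_1,\dots,v_m]$, $R=\kk(K)$, let $\mathbf x$ be the sequence $v_1,\dots,v_m$, and write $H_*(\mathbf x;R)$ for the homology of the Koszul complex $\Lambda_R[e_1,\dots,e_m]$ with $de_i=v_i$. Since the Koszul complex of $S$ on $\mathbf x$ is the minimal free resolution of $\kk$ over $S$, there is a canonical isomorphism of bigraded $\kk$-algebras $\mathrm{Tor}_{\kk[m]}(\kk(K),\kk)\cong H_*(\mathbf x;R)$, the latter being the Koszul homology algebra of $R$ with respect to the minimal generating set $\mathbf x$ of the maximal graded ideal (this is exactly the algebra underlying the Baskakov-Hochster product). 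Applying the definition of Gorenstein* to $\sigma=\varnothing$ gives $\w H^{n-1}(K;\kk)\cong\kk$, so $K$ has no cone vertices and $\mathbf x$ is genuinely minimal; by Reisner's criterion (for Cohen-Macaulayness) together with Stanley's refinement of it (for Gorensteinness), the full set of link conditions in the definition is equivalent to $R$ being a Gorenstein ring. So it suffices to prove: \emph{$R$ is Gorenstein if and only if $H_*(\mathbf x;R)$ is a Poincar\'e algebra.}

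For the forward implication, let $c=m-n$ be the codimension of $R$ in $S$ and $F_\bullet\to R$ its minimal graded free resolution, which has length $c$ with $F_c$ free of rank one. As $R$ is Gorenstein, $\mathrm{Ext}^i_S(R,S)=0$ for $i\ne c$ and $\mathrm{Ext}^c_S(R,S)\cong\omega_R\cong R$ up to a degree shift; hence $\mathrm{Hom}_S(F_\bullet,S)$, read backwards, is again a minimal graded free resolution of $R$ up to a shift, i.e.\ $F_\bullet$ is self-dual with $F_i\cong\mathrm{Hom}_S(F_{c-i},S)$ up to a shift. Tensoring with $\kk$, this self-duality descends to $H_*(\mathbf x;R)=\bigoplus_i F_i\otimes_S\kk$: it is finite-dimensional, concentrated in homological degrees $0\le i\le c$, with $H_0\cong H_c\cong\kk$, and each $H_i$ is $\kk$-dual to $H_{c-i}$. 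The one nontrivial point is that the pairing implementing this self-duality agrees, up to a unit, with the multiplication $H_i\otimes H_{c-i}\to H_c$ of the Koszul homology algebra; verifying this uses the DG-algebra structure of the Koszul complex and the compatibility of comparison maps with products, and it is the technical heart of the Avramov-Golod argument. Thus $H_*(\mathbf x;R)$ is a Poincar\'e algebra. (On the combinatorial side this pairing is $\w H^{p-1}(K_W;\kk)\otimes\w H^{n-p-1}(K_{[m]\setminus W};\kk)\to\w H^{n-1}(K;\kk)\cong\kk$, a form of combinatorial Alexander duality inside the homology sphere $K$, which makes the duality transparent.)

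For the converse, assume $H=H_*(\mathbf x;R)$ is a Poincar\'e algebra with top homological degree $s$ and $H_s\cong\kk$. Since $H_i=\mathrm{Tor}^S_i(R,\kk)$ we get $\mathrm{pd}_S R=s$, hence $\mathrm{depth}\,R=m-s$ by the Auslander-Buchsbaum formula. The crucial step is to deduce that $R$ is Cohen-Macaulay, equivalently $s=c=m-\dim R$; this does not follow from the mere numerical symmetry of Betti numbers, and one must use the \emph{multiplicative} perfect pairing together with the DG-algebra structure of the Koszul complex (this is the substance of Avramov and Golod's proof of this direction) to force the depth of $R$ to be maximal. Granting Cohen-Macaulayness, the Cohen-Macaulay type of $R$ equals its last Betti number $\dim_\kk\mathrm{Tor}^S_c(R,\kk)=\dim_\kk H_s=1$, so $R$ is Gorenstein and the reduction is complete.

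The main obstacle is this converse direction, and specifically the passage from a hypothesis about the multiplicative structure of $H_*(\mathbf x;R)$ to the purely homological conclusion that $R=\kk(K)$ is Cohen-Macaulay; everything after that is formal (type one yields Gorenstein, and a self-dual resolution yields the Poincar\'e property) or is transferred back to the combinatorics of $K$ via Reisner's criterion and the Baskakov-Hochster description of $\bigoplus_{W\subseteq[m]}\w H^*(K_W;\kk)$. A secondary but genuine difficulty, in the forward direction, is checking that the abstract self-duality of the minimal free resolution is realized by the ring multiplication on $\mathrm{Tor}$.
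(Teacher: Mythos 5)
The paper does not prove this statement: it is quoted as a known theorem of Avramov and Golod and cited to Bruns--Herzog \cite[Theorem 3.4.5]{BH98}, so there is no in-paper argument to compare against. Your proposal is therefore judged on its own merits, and as a roadmap of the Avramov--Golod proof it is accurate. The reduction $\mathrm{Tor}_{\kk[m]}(\kk(K),\kk)\cong H_*(\mathbf x;\kk(K))$ via the Koszul resolution of $\kk$ is correct; invoking Reisner's criterion and Stanley's Gorenstein criterion to translate the Gorenstein* link conditions into ``$\kk(K)$ is a Gorenstein ring'' is the right move; the description of the forward direction via self-duality of the minimal free resolution, with the genuine difficulty being to identify the abstract duality pairing with the multiplication on $\mathrm{Tor}$, matches the actual argument; and you correctly pinpoint that the converse hinges on extracting Cohen--Macaulayness from the multiplicative Poincar\'e-duality hypothesis, after which type one plus Cohen--Macaulay gives Gorenstein.

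Two caveats. First, your proposal is a sketch, not a proof: the two steps you flag as ``the technical heart'' (pairing equals multiplication; Poincar\'e forces Cohen--Macaulay) are exactly the content of the theorem and are left to the citation, so the proposal establishes the reduction and the scaffolding but not the theorem itself. That is a fair thing to do for a result of this calibre, but it should be stated as such. Second, a small logical slip: from $\w H^{n-1}(K;\kk)\cong\kk$ you correctly infer that $K$ has no cone vertices, but the conclusion ``so $\mathbf x$ is genuinely minimal'' does not follow and is not the point --- $v_1,\dots,v_m$ is a minimal generating set of the maximal ideal of $\kk(K)$ whether or not $K$ has cone vertices. The relevance of the no-cone-vertex condition is different: it is what upgrades ``$\kk(K)$ is Gorenstein'' to ``$K$ is Gorenstein*'' (equivalently, it fixes the bidegree of the socle class of $\mathrm{Tor}$ to the expected $(-(m-n),2m)$), and that is precisely what is needed to make the combinatorial statement of the theorem align with the ring-theoretic Avramov--Golod theorem, which characterizes Gorenstein, not Gorenstein*.
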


\begin{Def}
Let $A$ be an algebra over a field $\kk$. Given a nonzero element $\alpha\in A$, if a $\kk$-subspace $V\subset A$ satisfies for any non-zero element $v\in V$, $v$ is a factor of $\alpha$ (i.e., there exists $u\in A$ such that $v\cdot u=\alpha$), then $V$ is called a \emph{factor space of $\alpha$ in $A$}. Denote by
$\mathcal {F}_\alpha$ the set of all factor spaces of $\alpha$. Define the \emph{factor index of $\alpha$} to be
\[\mathrm{ind}_A(\alpha):=\mathrm{max}\{\mathrm{dim}_\kk(V)\mid V\in \mathcal {F}_\alpha\}.\]

If $A=\bigoplus^d_{i=0}A^i$ is a graded $\kk$-algebra, and $\alpha\in A^j$ is a non-zero homogeneous element, if $V\subset A^k$ ($k\leq j$) is a factor space of $\alpha$ in $A$, then $V$ is called a \emph{$k$-factor space of $\alpha$ in $A$}. Denote by $\mathcal {F}_\alpha^k$ the set of all $k$-factor spaces of $\alpha$. The \emph{$k$-factor index of $\alpha$ in $A$}
is defined to be
\[\mathrm{ind}^k_A(\alpha):=\mathrm{max}\{\mathrm{dim}_\kk(V)\mid V\in \mathcal {F}_\alpha^k\}\]
\end{Def}

\begin{Def}
Let $R$ be a ring. For an element $r\in R$, the \emph{annihilator} of $r$ is defined to be
\[\text{ann}_R(r):=\{a\in R\mid a\cdot r=0\}.\]
\end{Def}
Apparently, if $A$ is an algebra over a field $\kk$, then for any element $\alpha\in A$, $\mathrm{ind}_A(\alpha)$ and $\mathrm{dim}_\kk(\mathrm{ann}_A(\alpha))$ are both algebraic invariants under isomorphisms.

\section{Results}
The main result of this paper is the following
\begin{thm}\label{thm:1}
Let $K$ be a flag $2$-sphere without $4$-belt. Then for any simplicial complex $K'$, if there is a graded isomorphism:
\[H^*(\ZZ_K)\cong H^*(\ZZ_{K'}),\]
then $K$ is combinatorially equivalent to $K'$.
\end{thm}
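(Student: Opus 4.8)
\emph{Proposal.} The plan is to run the programme of \cite[\S7]{FW}: first pin down the combinatorial type of $K'$ as far as Poincar\'e duality and Hochster's formula allow, and then reconstruct $K'$ from the factor indices and annihilators of the degree-three cohomology classes. Throughout write $A:=H^*(\ZZ_K;\kk)\cong H^*(\ZZ_{K'};\kk)$, taken with field coefficients, and let $\omega$ span the one-dimensional top component of $A$.

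\emph{Setting up the structure of $K'$.} Since $K$ is a simplicial $2$-sphere it is Gorenstein$^*$, so by Theorem~\ref{thm:3} and the Buchstaber--Panov isomorphism $A$ is a Poincar\'e algebra; hence $K'$ is also Gorenstein$^*$ over $\kk$. The top nonzero degree of $A$ equals the formal dimension $f_0(K')+\dim K'+1$ of $\ZZ_{K'}$ and that of $\ZZ_K$, namely $m+3$; and by Hochster's formula $\dim_\kk H^3(\ZZ_{K'})$ is the number of missing edges of $K'$, which must equal $\dim_\kk H^3(\ZZ_K)=\binom{m}{2}-(3m-6)=\binom{m-3}{2}$ (using $f_1=3f_0-6$ for every simplicial $2$-sphere). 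Combining these two identities with the Lower Bound Theorem for Gorenstein$^*$ complexes forces $\dim K'=2$ and $f_0(K')=m$, so $K'$ is a simplicial $2$-sphere with $m$ vertices and the same $f$-vector as $K$. Finally, a missing face $\sigma$ of $K'$ has $K'_\sigma=\partial\Delta^{|\sigma|-1}$, of dimension $|\sigma|-2$; as this is $\leq 2$, either $|\sigma|\in\{2,3\}$, or $|\sigma|=4$ and then $K'_\sigma\cong S^2$ equals all of $K'$, forcing $m=4$ --- impossible since $K$ flag gives $m\geq 6$. So every missing face of $K'$ has size $2$ or $3$.

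\emph{Flagness and the $4$-belt condition.} It remains here to exclude missing triangles. A missing triangle $\{a,b,c\}$ of $K'$ produces a nonzero $\gamma\in\w H^1(K'_{\{a,b,c\}})\subset H^5(\ZZ_{K'})$ with a very degenerate principal ideal: because no proper full subcomplex of a $2$-sphere on $m\geq 6$ vertices is a $2$-sphere, one computes $\gamma\cdot H^k(\ZZ_{K'})=0$ for $3\leq k\leq m-3$ and $\gamma\cdot H^{m-2}(\ZZ_{K'})=\kk\omega$, so $\dim_\kk(\gamma\cdot A)=2$. For the flag $2$-sphere $K$ there are no $3$-belts, so $H^5(\ZZ_K)$ is spanned by the $\w H^0$-classes of disconnected $4$-vertex full subcomplexes, and one checks (using $m\geq 6$ and the absence of $4$-belts, with the smallest $m$ treated by hand) that no nonzero class there has a two-dimensional principal ideal; this contradiction gives that $K'$ has no missing triangle, i.e. $K'$ is flag. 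For the $4$-belt condition, a product of two degree-three classes lies in $\bigoplus_{|I|=4}\w H^1(K_I)$, which is spanned exactly by the $4$-belt classes (a $4$-belt $\{a,b,c,d\}$ with diagonals $\{a,c\},\{b,d\}$ gives $\alpha_{ac}\cdot\alpha_{bd}\neq 0$); as $K$ has no $4$-belt, $H^3(\ZZ_K)\cdot H^3(\ZZ_K)=0$, hence $H^3(\ZZ_{K'})\cdot H^3(\ZZ_{K'})=0$, and since $K'$ is a flag $2$-sphere this forces $K'$ to have no $4$-belt.

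\emph{Reconstruction, and the main obstacle.} Now $K$ and $K'$ are flag $2$-spheres without $4$-belt on $m$ vertices; a flag complex is the clique complex of its $1$-skeleton, so it suffices to recover from $A$ the graph $\bar G$ of missing edges of $K$. First I would single out the missing-edge classes $\alpha_{ij}\in\w H^0(K_{\{i,j\}})\subset H^3(\ZZ_K)$ among all nonzero degree-three elements by an intrinsic extremal property (for instance, minimality of $\dim_\kk(\alpha\cdot A)$, since a generic combination $\sum c_{ij}\alpha_{ij}$ multiplies nontrivially into strictly more Hochster summands). With this distinguished basis fixed, the task is to recover which pairs of missing edges meet in a common vertex. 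Although $\alpha_{ij}\alpha_{kl}=0$ for all $i,j,k,l$ (no $4$-belt), the products $\alpha_{ij}\cdot\beta$, with $\beta$ ranging over the $\w H^0$-classes of small disconnected full subcomplexes (degrees $4,5,\dots$), detect precisely the belts of length $\geq 5$ through $\{i,j\}$, and here the hypothesis does the real work: since $\w H^1$ vanishes on every $4$-vertex full subcomplex, the first nontrivial products see honest pentagons. Organising these data by the factor indices $\mathrm{ind}^k_A(\alpha_{ij})$ and the dimensions $\dim_\kk\mathrm{ann}_A(\alpha_{ij})$ should recover, for each vertex $v$, the cycle $\mathrm{link}_K(v)$, hence all of $\bar G$ and therefore $K$, giving $K'\approx K$. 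The hard part is exactly this last step: showing that these purely ring-theoretic invariants genuinely encode the incidence pattern of the missing edges. This rests on a delicate determination of which products $\alpha_{ij}\cdot\beta$ vanish --- for which the explicit sign in the Baskakov--Hochster multiplication, \cite[Theorem~2.12]{FW}, must be tracked --- the decisive point being that the absence of $4$-belts forces $\w H^1$ to vanish on all $4$-vertex full subcomplexes. Verifying flagness of $K'$ is a secondary difficulty of the same nature.
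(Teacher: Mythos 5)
Your overall strategy---use the Poincar\'e algebra/Gorenstein$^*$ criterion plus a Lower Bound Theorem to pin down $K'$ as a $2$-sphere with the same $f$-vector, then detect flagness and absence of $4$-belts ring-theoretically, and finally reconstruct $K$ from the degree-three classes---matches the architecture of the paper (Theorem~\ref{thm:2}, the ``Proof of Theorem~\ref{thm:1}'' block, and Lemmas~\ref{lem:1}--\ref{lem:4}). The first step is essentially correct; the $4$-belt detection via $H^3\cdot H^3=0$ is a legitimate alternative to the paper's use of $\mathrm{ind}^3_R(\xi)$ from \cite[Corollary 7.3]{FW}. But there are two genuine gaps.

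First, the flagness argument is not a proof. You claim that for a missing triangle $\{a,b,c\}$ the class $\gamma\in\w H^1(K'_{\{a,b,c\}})$ has $\dim_\kk(\gamma\cdot A)=2$ and that no element of $H^5(\ZZ_K)$ for a flag, $4$-beltless $K$ shares this, but the latter is simply asserted (``one checks\ldots with the smallest $m$ treated by hand''). The paper instead invokes \cite[Theorems 3.11, 5.7]{FW}: a simplicial $2$-sphere is flag iff $\w H^*(\ZZ_K)/([\ZZ_K])$ is indecomposable as a ring. Without either that citation or an actual computation of the principal ideals of all degree-$5$ classes, this step is open.

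Second, and more seriously, the entire reconstruction---what you yourself call ``the main obstacle''---is only gestured at. You propose to single out the missing-edge classes by minimality of $\dim_\kk(\alpha\cdot A)$ (equivalently, maximality of $\dim_\kk\mathrm{ann}_A(\alpha)$), but you do not prove that a nontrivial linear combination $\sum c_{ij}\alpha_{ij}$ has strictly larger principal ideal than each constituent $\alpha_{ij}$. That inequality is precisely Lemma~\ref{lem:2}, and it is not a soft fact: its proof rests on the combinatorial Lemma~\ref{lem:1}, which says that for any missing edge $\{v_1,v_2\}$ and any third vertex $v_3$ one can find a belt $I$ through $\{v_1,v_2\}$ avoiding $v_3$ with $\w H^0(K_{\{v_3\}\cup I\setminus\{v_1,v_2\}})\neq 0$; this is a nontrivial re-routing argument inside a flag $2$-sphere without $4$-belt and is entirely absent from your proposal. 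Moreover, once the basis $\{\alpha_{ij}\}$ of $H^3$ is pinned down, your plan to recover the incidence structure by ``organising'' products $\alpha_{ij}\cdot\beta$ by factor indices and annihilator dimensions does not say how to distinguish the belts that are vertex links from general belts. That is the content of Lemma~\ref{lem:3}: the paper distinguishes $\mathrm{link}_K(v)$ from a general $n$-belt $B_n$ by counting, for the span $V_{B_n}$ of the $t$ missing-face classes of $B_n$, how many linearly independent classes $\xi$ in degree $n+3$ admit $V_{B_n}$ as a maximal $3$-factor space ($m-n-1$ when $B_n$ is a vertex link, and more otherwise, leading to a contradiction via the existence of a shorter belt through a new vertex). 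Without this counting argument and the final check (Lemma~\ref{lem:4}) that the induced vertex bijection preserves edges (using that two disjoint vertex links in a $4$-beltless $K'$ share no missing face), the reconstruction step is not established. In short: the scaffolding is right, but the two load-bearing pieces---Lemma~\ref{lem:1}/Lemma~\ref{lem:2} and Lemma~\ref{lem:3}---are missing.
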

To prove this theorem, first we prove it holds for the case $K'$ is a simplical $2$-sphere. Second, we prove that for a simplicial complex $K'$, if $H^*(\ZZ_{K'};\kk)$ agree with some algebraic property of $H^*(\ZZ_K;\kk)$ with $K$ a flag $2$-sphere, then $K'$ itself must be a flag $2$-sphere.
We separate off the proof into several lemmas.
\begin{figure}[!ht]
\vspace{6pt}
\begin{overpic}[scale=0.35]{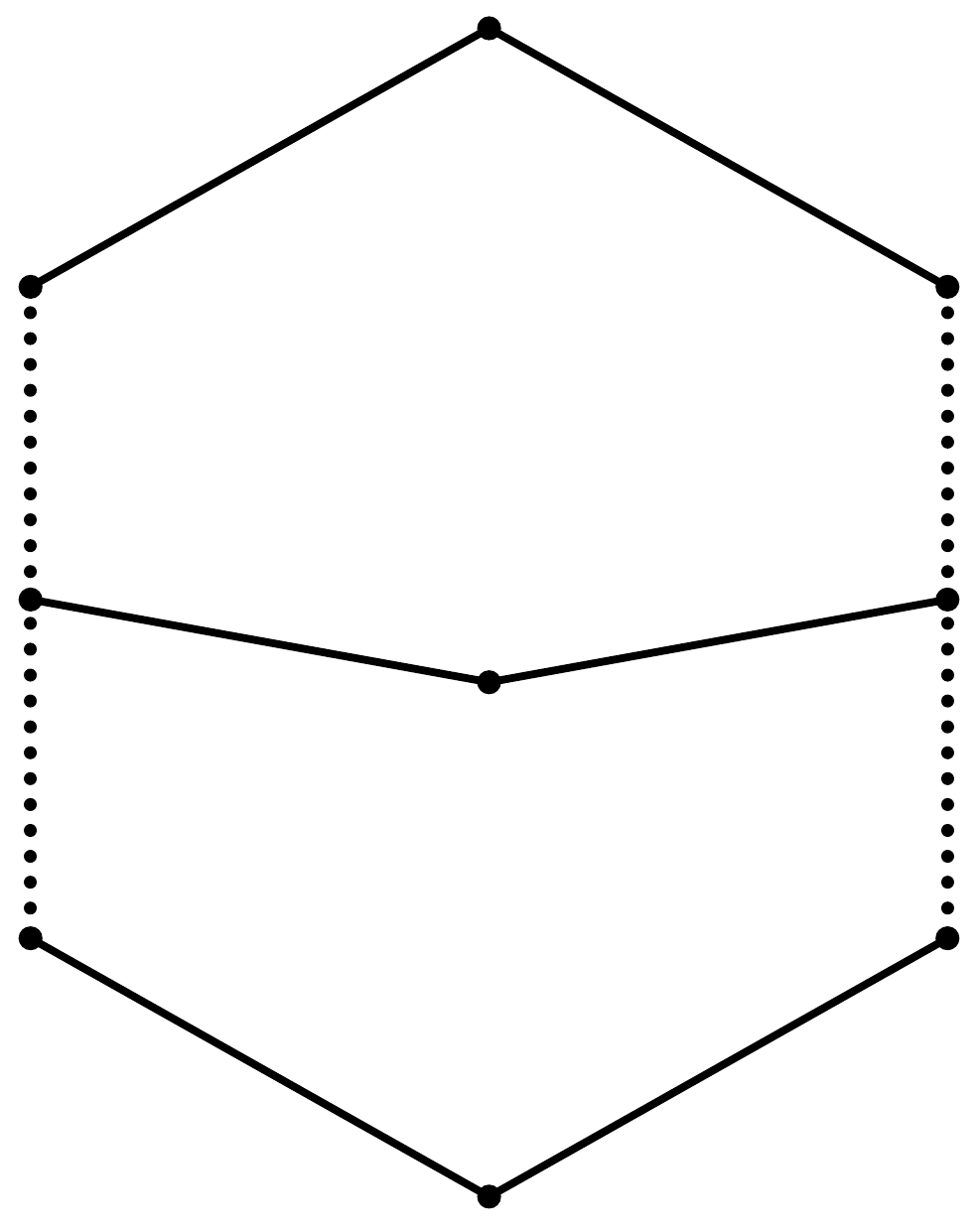}
\put(38,102){$v_1$}
\put(38,-6){$v_2$}
\put(38,38){$v_3$}
\put(-8,77){$u_1$}
\put(-8,22){$u_s$}
\put(-11,50){$u_{k_0}$}
\put(79,77){$w_1$}
\put(79,22){$w_t$}
\put(79,50){$w_{j_0}$}
\end{overpic}
\qquad\raisebox{12ex}{$\longrightarrow$}\qquad
\begin{overpic}[scale=0.35]{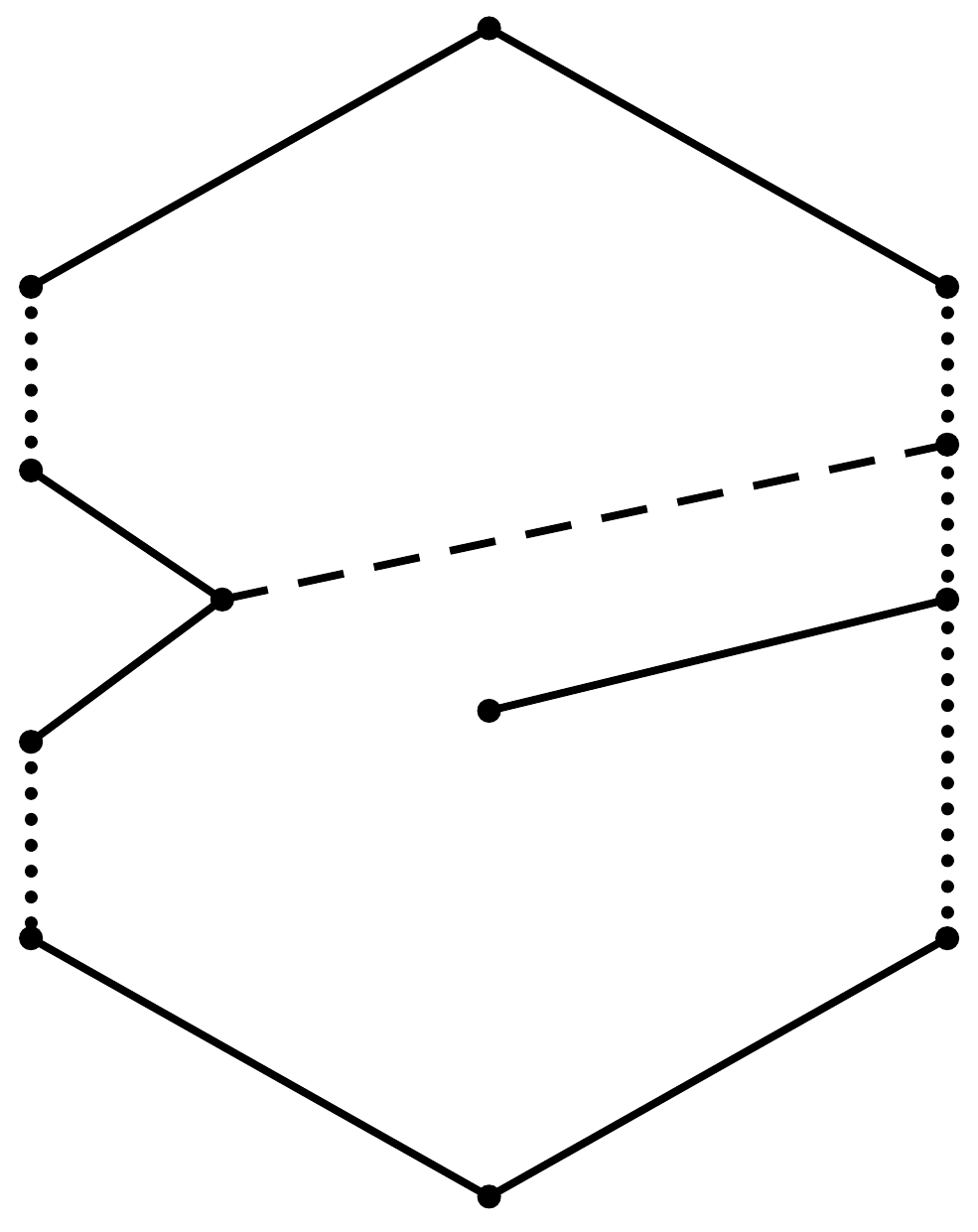}
\put(38,102){$v_1$}
\put(38,-6){$v_2$}
\put(38,36){$v_3$}
\put(-8,77){$u_1$}
\put(-8,22){$u_s$}
\put(79,77){$w_1$}
\put(79,22){$w_t$}
\put(79,48){$w_{j_0}$}
\put(79,62){$w_{j_1}$}
\end{overpic}
\vspace{6pt}
\caption{}\label{fig:1}
\end{figure}

\begin{lem}\label{lem:1}
Let $K$ be a flag $2$-sphere with vertex set $[m]$. Suppose $K$ has no $4$-belt. If $\{v_1,v_2\}\in MF(K)$ and $v_3$ is another vertex of $K$, then there exists $I\subset [m]\setminus\{v_3\}$ such that $\{v_1,v_2\}\subset I$, $K_I$ is a triangulation of $S^1$ and $\w H^0(K_{J})\neq0$, where $J=\{v_3\}\cup I\setminus\{v_1,v_2\}$.
\end{lem}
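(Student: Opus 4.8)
The plan is to reduce the statement to the existence of one conveniently placed belt and then to construct that belt. Since $K$ is flag, $\{v_1,v_2\}\in MF(K)$ simply means $v_1\not\sim v_2$, and I will look for $I$ with $v_3\notin I$, $\{v_1,v_2\}\subset I$, and $K_I$ an induced cycle through $v_1$ and $v_2$ (a belt). Because $v_1\not\sim v_2$ and $K$ has no $4$-belt, any such $K_I$ has at least five vertices; deleting $v_1$ and $v_2$ from the cycle $K_I$ leaves two nonempty induced paths $P'$ and $P''$, namely its two $v_1v_2$-arcs, and no edge of $K$ joins $P'$ to $P''$ (a chord of the full subcomplex $K_I$ is impossible), so $K_{I\setminus\{v_1,v_2\}}=P'\sqcup P''$ has exactly two connected components. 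For $J=\{v_3\}\cup(I\setminus\{v_1,v_2\})$, the neighbours of $v_3$ inside $I\setminus\{v_1,v_2\}$ are precisely the vertices of that set lying on $\operatorname{link}_K(v_3)$; hence if one of $P',P''$ is disjoint from $\operatorname{link}_K(v_3)$, then $v_3$ attaches to at most one of the two components and $K_J$ is still disconnected, i.e.\ $\w H^0(K_J)\ne0$. So the lemma reduces to: there is a belt of $K$ through $v_1$ and $v_2$, not containing $v_3$, one of whose two $v_1v_2$-arcs avoids $\operatorname{link}_K(v_3)$.

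To build such a belt I use the disk picture. Since $K$ is a $2$-sphere, $D:=K_{[m]\setminus\{v_3\}}$ is the complement of the open star of $v_3$, hence a triangulated disk whose boundary is the cycle $L:=\operatorname{link}_K(v_3)$; flagness makes $L$ an \emph{induced} cycle, and the no-$4$-belt hypothesis forces $|L|\ge5$ and, crucially, forces every vertex $x\ne v_3$ with $x\not\sim v_3$ to have at most two neighbours on $L$, and two only when those two are consecutive on $L$ (otherwise $x$ and two non-consecutive vertices of $L$ would close up a $4$-belt through $v_3$). When $v_3$ is adjacent to neither $v_1$ nor $v_2$, I pass to $D'':=K_{[m]\setminus\overline{\operatorname{star}}_K(v_3)}$; the bound just noted shows $\operatorname{link}_{D''}(x)=\operatorname{link}_K(x)\setminus L$ is a connected arc (or cycle) for every vertex of $D''$, and a short check rules out edges of $D''$ lying in no triangle, so $D''$ is a triangulated disk and its $1$-skeleton is $2$-connected. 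Then the non-adjacent vertices $v_1,v_2\in D''$ lie on a common cycle of $D''$, and such a cycle can be pushed off its chords; the resulting belt lies inside $D''$, so it misses $v_3$ and all of $L$ and either arc will do.

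The remaining configurations, where $v_3$ is adjacent to $v_1$ or to $v_2$ (so one or both of them lie on $L$), I handle by an explicit surgery of the kind depicted in Figure~\ref{fig:1}: starting from some belt through $v_1$ and $v_2$ that avoids $v_3$, as long as $v_3$ meets both of its arcs I reroute one arc around a neighbour of $v_3$, producing a new belt through $v_1,v_2$ for which a fixed nonnegative integer strictly decreases, and I iterate until $v_3$ touches only one arc. I expect this inductive step to be the main obstacle: at each reroute one must keep the cycle passing through both $v_1$ and $v_2$, keep it an induced subcomplex — this is exactly where no-$4$-belt enters, ruling out a short chord across the reroute — and make the complexity genuinely drop, so the process terminates; the two points deferred in the easy case (that $D''$ is a disk, and that the cycle found in it can be taken chord-free) are of the same flavour and rest on the same flag and $4$-belt-free input.
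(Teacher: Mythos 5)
Your reduction---find a belt of $K$ through $v_1,v_2$ avoiding $v_3$, one of whose two $v_1v_2$-arcs is disjoint from $\mathrm{link}_K(v_3)$---is exactly the reduction the paper makes, and your branch~2 (iterated rerouting) is in substance the paper's argument: the paper starts from the belt supplied by \cite[Lemma~5.2]{FW} and performs a surgery in which strictly decreasing indices $k_0>k_1>\cdots$ and $j_0>j_1>\cdots$ force termination. But you explicitly defer precisely the points that make that work---that each reroute keeps the cycle induced (no chord appears), keeps both $v_1$ and $v_2$ on it, and strictly decreases a well-founded complexity---so as written that branch is a plan, not a proof; those are the steps the paper actually spends its effort on.

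Your branch~1, locating the belt inside $D''=K_{[m]\setminus\overline{\mathrm{star}}_K(v_3)}$ when $v_3\not\sim v_1,v_2$, is a genuinely different idea, but each of the three things it rests on is nontrivial and unestablished. (i)~The observation that each vertex of $D''$ has link an arc or a cycle only makes $D''$ locally a surface with boundary; it says nothing about connectedness or simple connectedness, so ``$D''$ is a disk'' does not follow (a priori $D''$ could be disconnected or an annulus). (ii)~Even granting that the $1$-skeleton of $D''$ is a $2$-connected planar graph, extracting an \emph{induced} cycle through both $v_1$ and $v_2$ is not a matter of ``pushing off chords'': a chord $\{a,b\}$ with $a,b$ on the two different $v_1v_2$-arcs produces two shorter cycles, neither containing both $v_1$ and $v_2$, so the naive chord-shortcut can drop one of the required vertices. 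This is exactly the difficulty that \cite[Lemma~5.2]{FW} overcomes for the sphere $K$, and it does not transfer for free to a disk. (iii)~More basically, it is not clear that a belt through $v_1,v_2$ contained entirely in $D''$ exists at all: \cite[Lemma~5.2]{FW} gives a belt avoiding one chosen vertex, not one avoiding an entire closed star, and the latter is a materially stronger demand. Until these are settled, branch~1 does not close either, so the proposal as a whole correctly identifies the shape of the argument but leaves the real work open.
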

\begin{proof}
By \cite[Lemma 5.2]{FW}, there is a subset $I'\subset[m]\setminus\{v_3\}$, such that $\{v_1,v_2\}\subset I'$ and $K_{I'}$ is a triangulation of $S^1$. Clearly $K_{I'\setminus\{v_1,v_2\}}$ has two components $L_1$ and $L_2$ which are both the triangulation of $D^1$. Suppose the vertex sets of $L_1$ and $L_2$ are $U=\{u_1,\dots,u_s\}$ and $W=\{w_1,\dots,w_t\}$ resp. Set $U_0=\{v_1,v_2\}\cup U$, $W_0=\{v_1,v_2\}\cup W$.
If $\mathrm{link}_K(v_3)\cap L_1=\varnothing$ or $\mathrm{link}_K(v_3)\cap L_2=\varnothing$, then we get the desired $I=I'$. So we only consider the case
$\mathrm{link}_K(v_3)\cap L_1\neq\varnothing$ and $\mathrm{link}_K(v_3)\cap L_2\neq\varnothing$.

First we deal with the case that each of $\mathrm{link}_K(v_3)\cap L_1$ and $\mathrm{link}_K(v_3)\cap L_2$ has only one vertex, say $u_{k_0}$ and $w_{j_0}$ resp. (which are shown in Figure \ref{fig:1}). Apparently $K_{I'}$ separates $K$ into two disks
$K_1$ and $K_2$ with $K_{I'}$ as the common boundary. Suppose $v_3\in K_1$. Let $\mathcal {V}_{k_0}$ be the vertex set of $\mathrm{link}_{K_2}(u_{k_0})$. Then it is easy to see that
there exist $U'_0\subset U_0\setminus\{u_{k_0}\}$ and $\mathcal {V}_{k_0}'\subset \VV_{k_0}$ such that the full subcomplex restricted on $U_1=U'_0\cup \VV_{k_0}'$ ($\{v_1,v_2\}\subset U_1$) is still a triangulation of $D^1$.
Set \[\Gamma_1=\bigcup_{u\in \VV_{k_0}'}\mathrm{link}_{K}(u)\cap W.\]
If $\Gamma_1=\varnothing$, then put $I=U_1\cup W_0$. It is easily verified that $I$ satisfies the condition in the lemma. Otherwise, for $\Gamma_1\neq\varnothing$, since $K$ has no $4$-belt, we have $w_{j_0}\not\in\Gamma_1$ (otherwise $K_{(v_3,u_{k_0},v,w_{j_0})}$ would be a $4$-belt for some $v\in\mathrm{link}_{K_2}(u_{k_0})$).
For simplicity we may assume $\Gamma_1$ has only one vertex, say $w_{j_1}$ with $j_1<j_0$. Since $K$ has no $4$-belt, $v_3\not\in\mathrm{link}_{K_1}(w_{j_1})$ (otherwise $K_{(w_{j_1},v_3,u_{k_0},v)}$ would be a $4$-belt for some $v\in\mathrm{link}_{K_2}(u_{k_0})$). Let $\Ss_{j_1}$ be the vertex set of $\mathrm{link}_{K_1}(w_{j_1})$.
As before we can find $W_0'\subset W_0\setminus \{w_{j_1}\}$ and $\Ss_{j_1}'\subset \Ss_{j_1}$ such that the full subcomplex on $W_1=W_0'\cup \Ss_{j_1}'$ ($\{v_1,v_2\}\subset W_1$) is a triangulation of $D^1$. Set \[\Omega_1=\bigcup_{w\in \Ss_{j_1}'}\mathrm{link}_{K}(w)\cap (U_1\setminus\{v_1,v_2\}).\]
If $\Omega_1=\varnothing$, then put $I=W_1\cup U_1$, we get the desired $I$. Otherwise for simplicity we may assume $\Omega_1$ has only one vertex $u_{k_1}$ (clearly $k_1<k_0$). Let $\mathcal {V}_{k_1}$ be the vertex set of $\mathrm{link}_{K_2}(u_{k_1})$, then as before we can find $U'_1\subset U_1\setminus\{u_{k_1}\}$ and $\mathcal {V}_{k_1}'\subset \VV_{k_1}$ such that the full subcomplex on $U_2=U_1'\cup \VV_{k_1}'$ ($\{v_1,v_2\}\subset U_2$) is a triangulation of $D^1$.
Set \[\Gamma_2=\bigcup_{u\in \VV_{k_1}'}\mathrm{link}_{K}(u)\cap (W_1\setminus\{v_1,v_2\}).\]
If $\Gamma_2=\varnothing$, then put $I=U_2\cup W_1$, we still get the desired $I$. Otherwise, for simplicity we may assume $\Gamma_2$ has only one vertex $w_{j_2}$ (clearly $j_2<j_1$). Let $\Ss_{j_2}$ be the vertex set of $\mathrm{link}_{K_1}(w_{j_2})$.
As before we can find $W_1'\subset W_1\setminus \{w_{j_2}\}$ and $\Ss_{j_2}'\subset \Ss_{j_2}$ such that the full subcomplex on $W_2=W_1'\cup \Ss_{j_2}'$ ($\{v_1,v_2\}\subset W_2$) is a triangulation of $D^1$. Set \[\Omega_2=\bigcup_{w\in \Ss_{j_2}'}\mathrm{link}_{K}(w)\cap (U_2\setminus\{v_1,v_2\}).\]
If $\Omega_2=\varnothing$, then we get the desired $I=W_2\cup U_2$. Otherwise for simplicity we may assume $\Omega_2$ has only one vertex $u_{k_2}$ (clearly $k_2<k_1$).
Continuing this procedure if needed, then after finite step, we can actually get the desired $I$.

For the general case, we can remove the vertex of $\mathrm{link}_K(v_3)\cap L_1$ one by one by the procedure above.
\end{proof}

Since $H^*(\ZZ_K;\kk)$ is isomorphic to the Baskakov-Hochster ring $\bigoplus_{I\subseteq [m]} \w {H}^*(K_I;\kk)$, we do not distinguish these two rings whenever there is no confusion.
Define \[p_J:\bigoplus_{I\subseteq [m]} \w {H}^*(K_I;\kk)\to \w {H}^*(K_J;\kk)\] to be the projection homomorphism.

If $K$ is a flag complex, then for any missing face $\omega$ of $K$ (so $\omega$ contains only two vertices), $\w H^*(K_\omega)=\w H^*(S^0)=\kk$. Denote by $\b\omega$ a generator of this group.

\begin{lem}\label{lem:2}
Let $K$ be a flag $2$-sphere without $4$-belt. Suppose $MF(K)=\{\omega_1,\dots\omega_n\}$, and $\alpha=\sum_{i=1}^n r_i\cdot\b\omega_i,\, r_i\in \kk$. Define $\MM_\alpha=\{i\mid r_i\neq0\}$. If $|\MM_\alpha|\geq2$, then for each $i\in \MM_\alpha$ we have
\[\mathrm{dim}_\kk(\mathrm{ann}_R(\b \omega_i))>\mathrm{dim}_\kk(\mathrm{ann}_R(\alpha)),\]
where $R=H^*(\ZZ_K;\kk)$.
\end{lem}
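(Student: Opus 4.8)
The plan is to exhibit, for each $i \in \MM_\alpha$, an explicit injection of $\mathrm{ann}_R(\alpha)$ into $\mathrm{ann}_R(\b\omega_i)$ that misses at least one element, so the dimension strictly increases. The starting observation is that the product structure on the Baskakov-Hochster ring is graded by subsets of $[m]$: $\b\omega_i \cdot x$ lands in the summands indexed by sets containing $\omega_i$ but disjoint from the other supporting set, via the join map $\eta : K_{I\cup J} \hookrightarrow K_I * K_J$. Since each $\b\omega_i$ lives in $\w H^0(K_{\omega_i})$ and the $\omega_i$ are distinct missing faces, the products $\b\omega_i \cdot x$ for different $i$ land in different collections of graded pieces whenever their second factors overlap appropriately. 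Concretely, I would first show that $\mathrm{ann}_R(\b\omega_i) \supseteq \mathrm{ann}_R(\alpha)$ is \emph{false} in general but that a suitably twisted version holds; more precisely, the key is to find one element $\xi_i$ that kills $\b\omega_i$ but not $\alpha$, together with the reverse containment of annihilators after accounting for that element.

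The heart of the argument should be Lemma~\ref{lem:1}. Fix $i \in \MM_\alpha$; write $\omega_i = \{v_1, v_2\}$, and pick $j \neq i$ in $\MM_\alpha$, say $\omega_j = \{v_3, v_4\}$ (or sharing a vertex with $\omega_i$ — both cases must be handled). Apply Lemma~\ref{lem:1} with $v_3$ the relevant third vertex to produce $I \subset [m]\setminus\{v_3\}$ with $\{v_1,v_2\}\subset I$, $K_I \cong S^1$, and $\w H^0(K_J) \neq 0$ where $J = \{v_3\}\cup I \setminus \{v_1,v_2\}$. The class $[K_I] \in \w H^1(K_I)$ is then, via the join isomorphism $\mu$, expressible as $\b\omega_i \cdot z_I$ for some $z_I \in \w H^0(K_{I\setminus\{v_1,v_2\}})$; that is, $z_I$ detects $\b\omega_i$. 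The point is that because $v_3 \notin I$, the same $z_I$ (or its image under a controlled modification) multiplies $\b\omega_j$ into a summand where the cohomology detects a \emph{different} cycle, using that $\w H^0(K_J) \neq 0$ forces $K_J$ to be disconnected and hence the relevant join is nontrivial; this lets me separate $\b\omega_i$ from the rest of $\alpha$. I would then argue that $z_I \notin \mathrm{ann}_R(\b\omega_i)$ while, after possibly adjusting by lower terms, $\mathrm{ann}_R(\alpha)$ embeds into $\mathrm{ann}_R(\b\omega_i)$: any $a$ with $a\cdot\alpha = 0$, projected to the graded pieces where only the $\b\omega_i$ contribution is visible, must already annihilate $\b\omega_i$ there, and the graded pieces are chosen (via the $I$ from Lemma~\ref{lem:1}) so that $a\cdot\b\omega_i = 0$ globally.

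The main obstacle I anticipate is the bookkeeping of signs and of exactly which graded summands the products land in: the Baskakov product is only associative/well-defined up to the sign worked out in \cite[Theorem 2.12]{FW}, and distinguishing "$a$ kills $\alpha$" from "$a$ kills each $\b\omega_i$" requires knowing that the projections $p_J$ onto the pieces indexed by sets containing a given $\omega_i$ do not mix contributions from $\omega_j$, $j\neq i$ — which can fail when $\omega_i \cap \omega_j \neq \varnothing$, since then a product $a\cdot\b\omega_j$ could land in the same summand as $a\cdot\b\omega_i$. Handling that overlap case is where Lemma~\ref{lem:1}'s freedom to avoid the vertex $v_3$ becomes essential, and I expect the proof to split into the disjoint-support and shared-vertex subcases, with the latter needing a second application of the no-$4$-belt hypothesis to guarantee the separating circle $K_I$ can be routed around the offending vertex. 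Once the correct $I$ is in hand for every pair $(i,j)$, producing the strict inequality is a dimension count on a single extra detecting class $z_I$.
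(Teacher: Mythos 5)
Your proposal correctly identifies Lemma~\ref{lem:1} as the engine and correctly senses that the heart of the matter is to produce a single element separating $\mathrm{ann}_R(\b\omega_i)$ from $\mathrm{ann}_R(\alpha)$, but the surrounding framework is off in a way that leaves a real gap. You frame the argument as building an injection $\mathrm{ann}_R(\alpha)\hookrightarrow\mathrm{ann}_R(\b\omega_i)$ that misses one element. There is no such natural injection, and the ``projection to graded pieces where only $\b\omega_i$ is visible'' heuristic does not supply one: if $a=\sum_U a_U$ with $a_U\in\w H^*(K_U)$ and both $\omega_i,\omega_j\subset U$ for some $U$, then the $U$-component of $a\alpha$ receives contributions from $a_{U\setminus\omega_i}\b\omega_i$ \emph{and} from $a_{U\setminus\omega_j}\b\omega_j$, which can cancel, so $a\alpha=0$ does not force $a\b\omega_i=0$ even ``in the pieces where only $\b\omega_i$ is visible.'' The paper avoids this entirely: rather than mapping annihilators into each other, it chooses a complement $V_{\b\omega_1}=\bigoplus_{I}V_{\b\omega_1,I}$ of $\mathrm{ann}_R(\b\omega_1)$ that is graded (each summand inside a single $\w H^*(K_I)$), observes that this gradedness forces $\beta\alpha\neq 0$ for every nonzero $\beta\in V_{\b\omega_1}$, and then enlarges $V_{\b\omega_1}$ by a single new element $\lambda\in\mathrm{ann}_R(\b\omega_1)$ with $\lambda\alpha\neq 0$. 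The conclusion $\dim\mathrm{ann}_R(\alpha)\le \dim R -\dim V_{\b\omega_1}-1=\dim\mathrm{ann}_R(\b\omega_1)-1$ is a codimension count, not an injection of the annihilators themselves. This complementary-subspace device is the missing idea in your write-up.

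There is also a role reversal in your use of Lemma~\ref{lem:1}. You apply it with the excluded vertex $v_3$ coming from $\omega_j$ and with $\omega_i=\{v_1,v_2\}\subset I$, and then isolate $z_I\in\w H^0(K_{I\setminus\omega_i})$ with $z_I\b\omega_i=[K_I]\neq 0$. That class does \emph{not} annihilate $\b\omega_i$, so it is the wrong kind of witness: what is needed, and what the paper builds, is a class $\lambda$ that \emph{does} kill $\b\omega_i$ (because its support meets $\omega_i$) while still acting nontrivially on the other term $\b\omega_j$. In the paper's indexing, the missing face one proves the inequality \emph{for} is the one whose vertex $v_3$ is excluded from the circle $K_I$; the circle instead contains the \emph{other} missing face. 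Getting this right is exactly what makes $\lambda\in\mathrm{ann}_R(\b\omega_1)\setminus\mathrm{ann}_R(\alpha)$, and then the two subcases $v_4\in I\setminus\omega_2$ and $v_4\notin I\setminus\omega_2$ (corresponding to your anticipated ``disjoint-support'' and ``shared-vertex'' split) give the explicit $\lambda$ in $\w H^0(K_{I\setminus\omega_2})$ or $\w H^0(K_J)$, respectively. Your instinct that the no-$4$-belt hypothesis is needed to route $K_I$ around the offending vertex is correct, but that work is already packaged inside Lemma~\ref{lem:1}; what your proposal still needs is the graded complement $V_{\b\omega_1}$ and the extra step showing no nonzero element of $V_{\b\omega_1}\oplus\kk\lambda$ can annihilate $\alpha$.
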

\begin{proof}
In this proof, We omit the coefficient ring $\kk$ in the cohomology groups. Without loss of generality, we may assume $\alpha=\b\omega_1+\b\omega_2,$ and we only prove the inequality for $\b\omega_1$. View $R$ as a vector space over $\kk$, then we can find a complementary subspace $V_{\b\omega_1}$ of $\mathrm{ann}_R(\b w_1)$ in $R$ such that
\[\mathrm{ann}(\b \omega_1)\oplus V_{\b\omega_1}=R.\]
It is easy to see that for any $\beta\in V_{\b\omega_1}$, $\beta\b\omega_1\neq 0$. In fact, we can choose $V_{\b\omega_1}$ properly, such that $V_{\b\omega_1}$ has a basis $\{\beta_1,\dots\beta_s\}$ with the property that for each $i$, $\beta_i\in\w H^*(K_{I_i})$ for some $I_i\subset [m]$. This procedure can be realized as follows:
For each $I\subset[m]$, choose a complementary subspace $V_{\b\omega_1,I}$ of $\mathrm{ann}_R(\b w_1)\cap\w H^*(K_I)$ in $\w H^*(K_I)$, then $V_{\b\omega_1}$ can be defined by
 \[V_{\b\omega_1}=\bigoplus_{I\subset[m]}V_{\b\omega_1,I}.\]
 With this assumption on $V_{\b\omega_1}$, it is easily verified that for any $\beta\in V_{\b\omega_1}$, $\beta\alpha\neq 0$ (note that $\beta_j\b \omega_i\in \w H^*(K_{I_j\cup w_i})$).
Hence, if we can find an element $\lambda\in \mathrm{ann}(\b w_1)$,
such that for any element $\beta\in V_{\b\omega_1}\oplus\kk\cdot\lambda$, $\beta\alpha\neq 0$, then dim$_\kk(V_{\alpha})>\mathrm{dim}_\kk(V_{\b\omega_1})$, and so
\[\mathrm{dim}_\kk(\mathrm{ann}(\b \omega_1))>\mathrm{dim}_\kk(\mathrm{ann}(\alpha)).\]

Now let us find such an element. Suppose $\omega_1=\{v_3,v_4\}$ and $\omega_2=\{v_1,v_2\}$. Since $\omega_1\neq\omega_2$, we may assume $v_3\neq v_1,v_2$. Hence by Lemma \ref{lem:1}, there is an subset $I\subset[m]\setminus\{v_3\}$ such that $\omega_2\subset I$, $K_I$ is a triangulation of $S^1$ and $\w H^0(K_{J})\neq0$, where $J=\{v_3\}\cup I\setminus\omega_2$. If $v_4\in I\setminus\omega_2$, take $\lambda$ to be a generator of $\w H^0(K_{I\setminus\omega_2})=\kk$. A straightforward calculation shows that $\lambda\b\omega_2\neq0$ and $\lambda\in \mathrm{ann}(\b \omega_1)$ (since $\omega_1\cap (I\setminus\omega_2)\neq\varnothing$).
Since for any $\beta_1\in V_{\b\omega_1}$, there exists a subset $I_0\subset[m]$ containing $\omega_1$, such that $p_{I_0}(\beta_1\alpha)\neq0$, but $\lambda\alpha\in\w H^1(K_I)$ ($\omega_1\not\subset I$). Hence for any $\beta\in V_{\b\omega_1}\oplus\kk\cdot\lambda$, $\beta\alpha\neq 0$.

If $v_4\not\in I\setminus\omega_2$, let $i:K_{I\setminus\omega_1}\to K_{J}$ be the simplicial inclusion, and let $\lambda$ be a generator of $\w H^0(K_J)$ such that $i^*(\lambda)\neq0$.
It is easily verified that $\lambda\b\omega_2\neq0$ and $\lambda\in \mathrm{ann}(\b\omega_1)$. Hence
$\lambda\alpha\neq0\in\w H^1(K_{I\cup\{v_3\}})$. On the other hand, note that $V_{\b\omega_1}\subset\bigoplus_{\omega_1\cap U=\varnothing}\w H^*(K_U)$, so $V_{\b\omega_1}\cdot\b\omega_2\in\bigoplus_{v_3\not\in U}\w H^*(K_U)$, and so $p_{I\cup\{v_3\}}(V_{\b\omega_1}\cdot\b \omega_2)=0$. If $v_4\not\in\omega_2$, then clearly $p_{I\cup\{v_3\}}(V_{\b\omega_1}\cdot\omega_1)=0$; If $v_4\in \omega_2$, then the full subcomplex restricted on
$\{v_3\}\cup I\setminus\omega_1=I\setminus\{v_4\}$ is contractible, so $p_{I\setminus\{v_4\}}(V_{\b\omega_1})=0$ which implies that $p_{I\cup\{v_3\}}(V_{\b\omega_1}\cdot\omega_1)=0$. Hence on either case $p_{I\cup\{v_3\}}(V_{\b\omega_1}\cdot\alpha)=0$. It follows that  $\beta\alpha\neq 0$ for any $\beta\in V_{\b\omega_1}\oplus\kk\cdot\lambda$.
\end{proof}

\begin{cor}\label{cor:1}
If $K$ and $K'$ are both flag $2$-spheres without $4$-belt, and there is a graded isomorphism
\[\phi:H^*(\ZZ_K;\kk)\cong H^*(\ZZ_{K'};\kk),\]
Then for any missing face $\omega\in MF(K)$, $\phi(\b\omega)=\b\omega'$ (up to a multiplication) for some $\omega'\in MF(K')$.
Moreover for any $n$-belt $B_n$ of $K$ (denote by $\b B_n$ the generator of $\w H^1(B_n;\kk)=\kk$), $\phi(\b B_n)=\b B_n'$ (up to a multiplication) for some $n$-belt $B_n'\subset K'$.
\end{cor}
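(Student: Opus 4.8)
The plan is to reduce everything to degree $3$: first show that the missing faces are detected already by the ring structure of $H^3(\ZZ_K;\kk)$ via Lemma~\ref{lem:2}, and then bootstrap from the missing faces to the belts using the multiplicative structure. The starting observation is that for any flag complex $L$ the Hochster decomposition collapses in cohomological degree $3$: every summand $\w H^{2-|I|}(L_I)$ vanishes except those with $|I|=2$ and $I\in MF(L)$ (for $|I|\ge1$ the complex $L_I$ is nonempty, killing $\w H^{-1}$, and for $|I|=2$ it is connected unless $I$ is a missing edge), so $\{\b\omega\}_{\omega\in MF(L)}$ is a canonical $\kk$-basis of $H^3(\ZZ_L;\kk)$. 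Hence the graded isomorphism $\phi$ restricts to an isomorphism $\phi_3\colon H^3(\ZZ_K;\kk)\to H^3(\ZZ_{K'};\kk)$, $|MF(K)|=|MF(K')|=:n$, and the first assertion is equivalent to showing $\phi_3$ carries each coordinate line $\kk\b\omega_i$ onto a coordinate line $\kk\b\omega'_j$.

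For the missing-face statement I would argue by contradiction using the isomorphism invariant $c(\alpha):=\dim_\kk\mathrm{ann}(\alpha)$ and Lemma~\ref{lem:2}. Write $\phi_3(\b\omega_i)=\sum_j r_{ij}\b\omega'_j$, call $i$ \emph{bad} if $\MM_i:=\{j:r_{ij}\ne0\}$ has size $\ge2$, and suppose some index is bad; among bad indices choose one, $i$, with $c_i:=c(\b\omega_i)$ maximal. Since $c(\phi_3(\b\omega_i))=c_i$, Lemma~\ref{lem:2} applied in $H^*(\ZZ_{K'};\kk)$ gives $c(\b\omega'_j)>c_i$ for every $j\in\MM_i$. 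Applying $\phi_3^{-1}$ to $\phi_3(\b\omega_i)=\sum_{j\in\MM_i}r_{ij}\b\omega'_j$ writes the coordinate vector $\b\omega_i$ as a combination, with all coefficients nonzero, of the $\ge2$ linearly independent vectors $\phi_3^{-1}(\b\omega'_j)$; were each of these again a coordinate line, the right-hand side would be a nontrivial $\kk$-combination of $\ge2$ distinct coordinate vectors, violating linear independence of the $\b\omega$-basis, so some $\phi_3^{-1}(\b\omega'_{j_1})$ (with $j_1\in\MM_i$) is itself bad, and $c(\phi_3^{-1}(\b\omega'_{j_1}))=c(\b\omega'_{j_1})>c_i$. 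Now Lemma~\ref{lem:2} applied in $H^*(\ZZ_K;\kk)$ forces every $\b\omega_k$ in the support of $\phi_3^{-1}(\b\omega'_{j_1})$ to satisfy $c_k>c(\b\omega'_{j_1})>c_i$, hence by maximality of $c_i$ no such $k$ is bad and each $\phi_3(\b\omega_k)$ is a coordinate line; but then $\b\omega'_{j_1}=\phi_3\bigl(\phi_3^{-1}(\b\omega'_{j_1})\bigr)$ presents a single coordinate vector as a nontrivial combination of $\ge2$ distinct coordinate vectors --- a contradiction. So no index is bad, $\phi(\b\omega)\doteq\b\omega'$ for a map $\sigma\colon MF(K)\to MF(K')$, and $\sigma$ is a bijection ($\phi_3$ is invertible and $|MF(K)|=|MF(K')|$).

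For the belt statement I would first re-express belt classes multiplicatively. Let $B_n=K_I$ be an $n$-belt, so $K_I$ is an $n$-cycle and $n\ge5$ since $K$ is flag without $4$-belt, and let $D(B_n)\subseteq MF(K)$ consist of the $n(n-3)/2$ ``diagonals'' of $B_n$, i.e.\ the $2$-subsets of $I$ that are non-edges of $K$. For $\omega\in D(B_n)$ the subcomplex $K_{I\setminus\omega}$ is a disjoint union of two arcs, so $\w H^0(K_{I\setminus\omega})=\kk$; fixing a generator $\lambda_\omega$, the Baskakov product $\b\omega\cdot\lambda_\omega$ is a nonzero element of $\w H^1(K_I)=\kk$, i.e.\ $\b\omega\cdot\lambda_\omega\doteq\b B_n$ --- this is exactly the cochain computation already used inside the proof of Lemma~\ref{lem:2}. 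From this I would deduce (a) that the $\kk$-span $V_{B_n}$ of $\{\b\omega:\omega\in D(B_n)\}$ is a $3$-factor space of $\b B_n$: given $\sum r_\omega\b\omega\ne0$, pick $\omega_0$ with $r_{\omega_0}\ne0$ and multiply by $r_{\omega_0}^{-1}\lambda_{\omega_0}$, all cross terms dying because $\omega\cap(I\setminus\omega_0)=\omega\setminus\omega_0\ne\varnothing$ for $\omega\ne\omega_0$; and (b), by tracking the $I$-grading of the product, that a \emph{single} missing face $\omega'$ divides $\b B_n$ only if $\omega'\subseteq I$, hence only if $\omega'\in D(B_n)$. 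Thus $\b B_n$ carries a distinguished $3$-factor space spanned by missing-face classes, of dimension exactly $n(n-3)/2$; by the missing-face statement together with the invariance of divisibility and of $\mathrm{ind}^3$, this whole datum is transported by $\phi$ into $H^*(\ZZ_{K'};\kk)$.

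The hard part --- and where I expect most of the work to lie --- is to reverse this and recover the geometry: I would need a purely combinatorial statement that, in a flag $2$-sphere without $4$-belt, a set $S$ of missing faces admitting a common nonzero multiple is forced to be the diagonal set of a single belt, and furthermore that $\b B_n$ is (up to scalar) the \emph{only} class of degree $n+2$ carrying a missing-face $3$-factor space of dimension $n(n-3)/2$ --- the latter being delicate precisely because the belt classes do not by themselves span $H^{n+2}(\ZZ_{K'};\kk)$, so one must rule out spurious elements of that degree. I would attack the combinatorial core by induction on $n$, cutting $B_n$ along a diagonal into a belt of strictly smaller size glued to an arc, and using the flag and no-$4$-belt hypotheses exactly to eliminate the degenerate configurations and keep the induction closed. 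Putting these together identifies $\phi(\b B_n)$ with $\b B_n'$ (up to scalar) for a unique $n$-belt $B_n'\subseteq K'$.
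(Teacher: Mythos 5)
Your degree-$3$ reduction and the maximality argument for the missing-face assertion are sound, and in fact you are more careful than the paper on one point: the paper simply declares that one of the $\w\omega_i'$ satisfies $\phi^{-1}(\w\omega_1')=r\w\omega+r_0\w\omega_0$ with $r,r_0\ne0$ --- i.e., that some preimage simultaneously has support of size $\ge2$ \emph{and} contains $\omega$ --- and then runs a two-step chain with Lemma~\ref{lem:2}: $\dim\mathrm{ann}(\w\omega)=\dim\mathrm{ann}(\w\omega_1'+\w\omega_2')<\dim\mathrm{ann}(\w\omega_1')=\dim\mathrm{ann}(r\w\omega+r_0\w\omega_0)<\dim\mathrm{ann}(\w\omega)$. (The inequality signs as printed in the paper are reversed relative to Lemma~\ref{lem:2}, but this is clearly a typo; the argument is the one above.) Your extremal-choice argument reaches the same contradiction while making the existence of the needed preimage explicit, so on the first assertion you and the paper agree in substance.

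The belt assertion is where your proposal does not close. You correctly compute that $V_{B_n}=\mathrm{span}\{\w\omega:\omega\in D(B_n)\}$ is a $3$-factor space of $\w B_n$ of dimension $\binom{n}{2}-n$, and that only missing faces $\omega'\subseteq I$ can divide $\w B_n$, but you then candidly acknowledge the two statements you would still need --- that a set of missing faces with a common nonzero multiple must be the diagonal set of a single belt, and that $\w B_n$ is the only degree-$(n+2)$ class carrying such a factor space --- and you do not prove them. These are not minor: belt classes do not span $H^{n+2}(\ZZ_{K'};\kk)$, so ruling out ``spurious'' classes with large missing-face factor spaces is genuinely the crux. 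The paper avoids this entirely by invoking the proof of \cite[Corollary 7.3]{FW}, which already gives that a graded isomorphism sends $\w B_n$ to a linear combination $\sum_i r_i(\w B_n')_i$ of $n$-belt classes. Once one knows this, the rest is exactly the multigraded divisibility argument you set up: each $\w\omega_j'=\phi(\w\omega_j)$ divides $\sum_i r_i(\w B_n')_i$, hence (by tracking the $I$-grading) divides every $(\w B_n')_i$ with $r_i\ne0$, so $\omega_j'\in MF((B_n')_i)$ for all such $i$; a count gives $MF((B_n')_i)=\{\omega_1',\dots,\omega_t'\}$ whenever $r_i\ne0$, and since a simplicial complex is determined by its missing faces, only one $r_i$ can be nonzero. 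Without the input from \cite[Corollary 7.3]{FW} your second part remains incomplete.
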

\begin{proof}
Since $\w\omega\in H^3(\ZZ_K;\kk)$ and $\phi$ is a graded isomorphism, then \[\phi(\b\omega)=\sum_{\omega'\in MF(K')}r_{\omega'}\cdot\b\omega',\ r_{\omega'}\in\kk.\]
The first assertion is equivalent to saying that there is exactly one $r_{\omega'}\neq0$. Suppose on the contrary, we may assume $\phi(\b\omega)=\b\omega'_1+\b\omega'_2$.
Thus one of $\w \omega'_i$, $i=1,2$, say $\w \omega'_1$ satisfies $\phi^{-1}(\b\omega'_1)=r\cdot\b\omega+r_0\cdot\b\omega_0$ with $r,r_0\neq0$, where $\omega_0\in MF(K)$.
According to Lemma \ref{lem:2},
\[\begin{split}
\mathrm{dim}_\kk(\mathrm{ann}_{R}(\b \omega))=\mathrm{dim}_\kk(\mathrm{ann}_{R'}(\b \omega'_1+\b\omega'_2))>\mathrm{dim}_\kk(\mathrm{ann}_{R'}(\b\omega'_1))\\
=\mathrm{dim}_\kk(\mathrm{ann}_{R}(r\cdot\b\omega+r_0\cdot\b\omega_0))>\mathrm{dim}_\kk(\mathrm{ann}_R(\b\omega)).
\end{split}\]
We get a contradiction.

For the second assertion, suppose $MF(B_n)=\{\omega_1,\dots,\omega_t\}$ (clearly $t=\binom{n}{2}-n$). Note that $\phi(\b B_n)=\sum_{i}r_i\cdot (\b B'_n)_i$, where $\{(B'_n)_i\}$ is the $n$-belt set of $K'$  (cf. the proof of \cite[Corollary 7.3]{FW}).
It is easy to see that $\b\omega_j$ ($1\leq j\leq t$) is a factor of $\b B_n$, and $\phi(\b\omega_j)=\w\omega'_j$  for some $\omega_j'\in MF(K')$. It follows that $\w\omega'_j$ is a factor of $(\b B'_n)_i$ whenever $r_i\neq0$. This implies that $\omega_j'\in MF(( B'_n)_i)$.
So we have \[MF((B_n')_{i})=\{\omega_1'\dots,\omega_t'\},\ \text{ whenever }\ r_i\neq0.\]
Since a simplicial complex is uniquely determined by its missing face set, then there is exactly one $r_i\neq0$, which is the second assertion.
\end{proof}
The next step is to distinguish a class $\b B_n\in H^{n+2}(\ZZ_K;\kk)$ for which $B_n$ is the link of a vertex of $K$ from another class
$\b{\mathfrak{B}}_n$ for which $\mathfrak{B}_n$ is an $n$-belt of $K$ but not the link of any vertex of $K$. That is the following:
\begin{lem}\label{lem:3}
If $K$ and $K'$ are both flag $2$-spheres without $4$-belt, and there is an isomorphism
\[\phi:H^*(\ZZ_K;\kk)\cong H^*(\ZZ_{K'};\kk),\]
then for any $n$-belt $B_n$ which is the link of a vertex of $K$,  $\phi(\b B_n)=\b B_n'$ (up to a multiplication) with $B_n'$ the link of a vertex of $K'$.
\end{lem}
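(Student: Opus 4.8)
\emph{Proof strategy.} By Corollary~\ref{cor:1} we already have $\phi(\b B_n)=c\,\b B_n'$ for a nonzero scalar $c$ and some $n$-belt $B_n'$ of $K'$, and $\phi$ restricts to a bijection between $\{\b\omega:\omega\in MF(B_n)\}$ and $\{\b\omega':\omega'\in MF(B_n')\}$. So it suffices to pin down a property of the pair $(R,\b B_n)$, $R=H^*(\ZZ_K;\kk)$, that is invariant under graded ring isomorphisms and that holds precisely when the belt carried by $\b B_n$ is the link of a vertex; applying it to $\b B_n$ and to $c\,\b B_n'$ then forces $B_n'$ to be a vertex link, and by symmetry of the hypotheses the belt correspondence restricts to a bijection between vertex-link $n$-belts of $K$ and of $K'$.

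First I would reformulate the target property combinatorially. The belt $\mathfrak B_n=K_I$ separates $K$ into two disks $D_1,D_2$ whose interior vertex sets $P_1,P_2$ partition $[m]\setminus I$. Since $K_I$ is a full subcomplex it has no chords, so each $D_j$ has at least one interior vertex; and since $K$ is Gorenstein$^*$, Theorem~\ref{thm:3} makes $R$ a Poincar\'e algebra, and comparing the only summands of $R$ that can pair into the socle yields $\w H^0(K_{[m]\setminus I})\cong\kk$ and $\w H^{i}(K_{[m]\setminus I})=0$ for $i\ge1$. Hence $K_{P_1},K_{P_2}$ are connected and acyclic, and $\mathfrak B_n$ is the link of a vertex if and only if $|P_1|=1$ or $|P_2|=1$, i.e. if and only if $K_{[m]\setminus I}$ has an isolated vertex $v$ --- for then $\mathrm{link}_K(v)$ is a cycle inside the cycle $\mathfrak B_n$, hence equals it.

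Next I would isolate what $R$ ``sees'' about $\b B_n$. Writing $x=\sum_J x_J$ with $x_J\in\w H^*(K_J)$, each term $x_J\cdot\b B_n$ vanishes unless $J\cap I=\varnothing$, and then it lies in $\w H^{*+2}(K_{J\cup I})$; using that $\w H^2(K_{J'})\ne0$ forces $J'=[m]$ and that $\w H^{\ge3}$ of any full subcomplex vanishes, one gets $\b B_n\cdot x\in\kk\b B_n\oplus\kk[\ZZ_K]$ for all $x$, so $R\cdot\b B_n=\kk\b B_n\oplus\kk[\ZZ_K]$ and there is a unique line $\kk\mu\subseteq\w H^0(K_{[m]\setminus I})$ with $\b B_n\cdot\mu=[\ZZ_K]$. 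As $\phi$ is a graded ring isomorphism it carries $[\ZZ_K]$ to a scalar multiple of $[\ZZ_{K'}]$, hence $\mu$ to a scalar multiple of the analogous class $\mu'$ for $B_n'$. Thus the whole problem becomes: detect, inside $R$, whether the indecomposable class $\mu\in\w H^0(K_{[m]\setminus I})$ arises from a partition of $[m]\setminus I$ with a singleton block. The naive invariants are useless here: $\dim_\kk\mathrm{ann}_R(\b B_n)=\dim_\kk R-2$ always, and (since $H^*(\ZZ_{K_I})$ embeds in $R$ both as a subring and as a retract) $\mathrm{ind}_R(\b B_n)$ equals the factor index of the fundamental class of $\ZZ_{K_I}$, which depends only on the combinatorial type of the $n$-cycle $K_I$; so neither can see the distinction.

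The substantive step --- and the main obstacle --- is to build a ring-theoretic invariant out of $\mu$ together with the missing-face classes that detects an isolated vertex of $K_{[m]\setminus I}$. When $P_1=\{v\}$ there are exactly $m-n-1$ missing faces $\{v,u\}$, $u\in P_2$, all through the single vertex $v$ and all disjoint from $I$; these are precisely the missing faces $\omega$ with $\omega\cap I=\varnothing$ whose Poincar\'e duals in $\w H^1(K_{[m]\setminus\omega})$ restrict to a generator of $\w H^1(\mathfrak B_n)$. I would encode ``these classes are concurrent through a vertex not on $\mathfrak B_n$'' by a factor-space / annihilator count modelled on Lemma~\ref{lem:2}, applied to a suitable homogeneous element assembled from $\mu$ and those missing-face classes, and then invoke the companion combinatorial fact that a non-vertex-link belt admits no such configuration: if $w\notin V(\mathfrak B_n)$ and $K_{V(\mathfrak B_n)\cup\{w\}}$ were acyclic then $\mathrm{link}_K(w)$ would be a cycle containing $\mathfrak B_n$, hence equal to it, a contradiction. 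Verifying that the count is rigid --- that no spurious configuration occurs for a belt that is not a vertex link --- is exactly where the absence of a $4$-belt enters, and is the delicate part of the argument; granting it, $\b B_n$ satisfies the invariant, hence so does $c\,\b B_n'$, hence $B_n'$ is the link of a vertex of $K'$, which is the assertion of the lemma.
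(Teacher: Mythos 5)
Your setup is sound and your combinatorial reformulation is correct: after Corollary~\ref{cor:1} the only thing left is to distinguish, by a ring-theoretic invariant attached to $\b B_n$, whether the belt $B_n=K_I$ bounds a disk with a single interior vertex, and you correctly observe that the naive invariants $\dim_\kk\mathrm{ann}_R(\b B_n)$ and $\mathrm{ind}_R(\b B_n)$ cannot see this. But at precisely the point where the lemma needs to be proved you write ``I would encode \dots by a factor-space / annihilator count modelled on Lemma~\ref{lem:2}, applied to a suitable homogeneous element assembled from $\mu$ and those missing-face classes'' and then ``granting it, \dots.'' That is not a construction, it is a promissory note: no specific element is produced, no count is defined, and the delicate rigidity claim (that a non-vertex-link belt admits no spurious configuration) is explicitly deferred. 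The essential content of the lemma is exactly that missing step, so the proposal is incomplete.

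For comparison, the paper's actual invariant is concrete and sits one degree above $\b B_n$. Let $V_{B_n}\subset H^3(\ZZ_K;\kk)$ be the span of the classes $\b\omega$, $\omega\in MF(B_n)$; this is a maximal $3$-factor space of $\b B_n$. The invariant is the number of linearly independent elements $\xi\in H^{n+3}(\ZZ_K;\kk)$ for which $V_{B_n}$ is also a maximal $3$-factor space of $\xi$. When $B_n=\mathrm{link}_K(v_{n+1})$ and $K$ has no $4$-belt, one checks $MF(\mathrm{link}_K(v))\cap MF(B_n)=\varnothing$ for all $v\notin[n+1]$, so $K_{[n]\cup\{v\}}=B_n\cup_\sigma\Delta^i$ and $\w H^1(K_{[n]\cup\{v\}})\cong\kk$ gives a class $\xi_v$; by \cite[Prop.\ 3.2]{FW} each $\xi_v$ has $V_{B_n}$ as maximal $3$-factor space, and there are exactly $m-n-1$ of them. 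If $B_n'$ were not a vertex link, then either every $v\notin[n]$ satisfies $MF(\mathrm{link}_{K'}(v))\cap MF(B_n')=\varnothing$ (giving $m-n$ such classes, contradicting that $\phi$ preserves the count), or some $v$ does meet $MF(B_n')$; in the latter case one extracts an $l$-belt $B_l'$ through $v$ with $\mathcal L\setminus\{v\}\subset[n]$, pulls it back via Corollary~\ref{cor:1} to an $l$-belt $B_l$ of $K$, and uses the exact count $|MF(B_l')\cap MF(B_n')|=\binom{l}{2}-2l+3$ to force exactly one vertex of $B_l$ outside $[n+1]$, contradicting $MF(\mathrm{link}_K(v))\cap MF(B_n)=\varnothing$. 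Your proposal never reaches this dichotomy or the belt-transfer argument; you would need to supply both to have a proof.
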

\begin{proof}
As before, suppose $MF(B_n)=\{\omega_1,\dots,\omega_t\}$. Apparently, the $\kk$-subspace spanned by $\{\w\omega_1,\dots,\w\omega_t\}$ (denoted by $V_{B_n}$) is a maximal $3$-factor space of
$\w B_n$ in $H^*(\ZZ_K;\kk)$. Assume $B_n$ is the link of $v_{n+1}$ and the vertex sets of $B_n$ and $\mathrm{star}_K(v_{n+1})$ are resp. $[n]$ and $[n+1]$.
Since $K$ has no $4$-belt, then for any $v\in[m]\setminus[n+1]$, $MF(\mathrm{link}_K(v))\cap MF(B_n)=\varnothing$. It turns out that $K_{[n]\cup\{v\}}$ has the form $B_n\bigcup_{\sigma}\Delta^i$ ($i=0,1,2$), where $\{v\}\in\Delta^i$ and $\sigma\in B_n$. Thus $\w H^1(K_{[n]\cup\{v\}})\cong\kk$. Let $\xi_v\in H^{n+3}(\ZZ_K;\kk)$ be the element corresponding to a generator of $\w H^1(K_{[n]\cup\{v\}})$.
\cite[Proposition 3.2]{FW} implies that $V_{B_n}$ is also a maximal $3$-factor space of $\xi_v$, and it is easy to see that there are exactly $m-n-1$ linear independent elements
$\xi_{v_{n+2}},\dots,\xi_{v_m}\in H^{n+3}(\ZZ_K;\kk)$ with this property.

Suppose $MF(B_n')=\{\omega_1',\dots,\omega_t'\}$, and the vertex set of $B_n'$ is still $[n]$. From Lemma \ref{lem:2}, we have that $\phi(\b\omega_i)=\b\omega_i'$ (up to a permutation). Suppose on the contrary that $B_n'$ is not the link of any vertex of $K'$.
If for any $v\in[m]\setminus[n]$, $MF(\mathrm{link}_{K'}(v))\cap MF(B_n')=\varnothing$, then there are exactly $m-n$ linear independent elements
$\xi'_{v_{n+1}},\dots,\xi'_{v_m}\in H^{n+3}(\ZZ_{K'};\kk)$ such that $V_{B_{n}'}=\phi(V_{B_n})$ is a maximal $3$-factor space of $\xi'_{v_i}$ for each $n+1\leq i\leq m$.
This is impossible since $\phi$ is an algebra isomorphism. Hence there is at least one $v\in[m]\setminus[n]$
such that $MF(\mathrm{link}_{K'}(v))\cap MF(B_n')\neq\varnothing$.
An easy observation shows that there always exists an $l$-belt ($l\leq n$) $B_l'$ (with vertex set $\mathcal {L}$) in $K'$ such that $v\in \mathcal {L}$ and $\mathcal {L}\setminus\{v\}\subset[n]$. (see Figure \ref{fig:2} for an example).
\begin{figure}[!ht]
\vspace{6pt}
\begin{overpic}[scale=0.35]{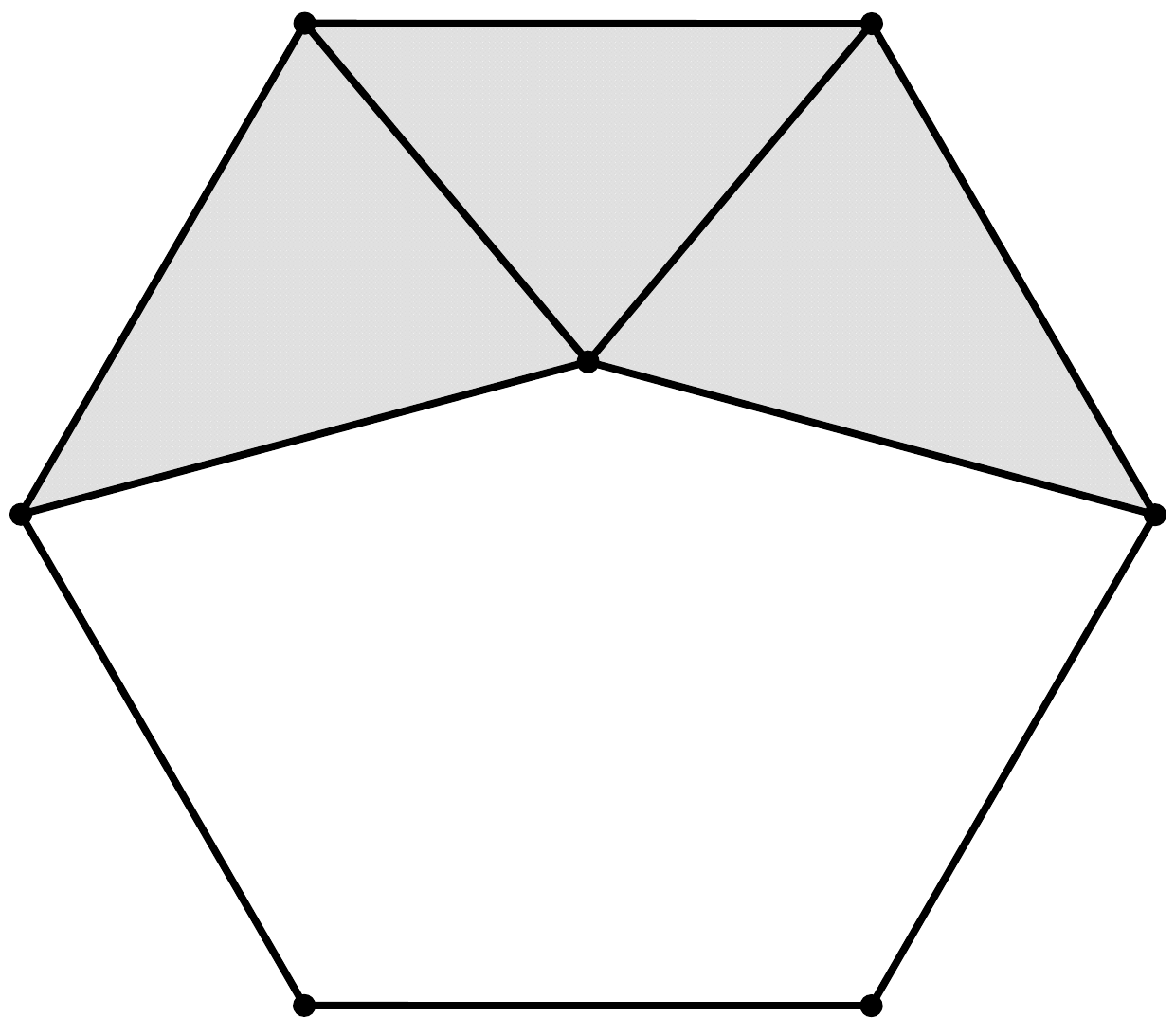}
\put(18,90){$v_5$}
\put(20,-5){$v_1$}
\put(48,50){$v$}
\put(74,-5){$v_2$}
\put(-8,42){$v_3$}
\put(100,42){$v_4$}
\put(74,89){$v_6$}
\end{overpic}
\vspace{6pt}
\caption{$n=6,\ l=5$ and $\mathcal {L}=\{v_1,v_2,v_3,v_4,v\}$}\label{fig:2}
\end{figure}
It is easy to see that $MF(B_l')\cap MF(B_n')$ contains $s=\binom{l}{2}-2l+3$ missing faces. We may assume $MF(B_l')\cap MF(B_n')=\{\omega'_1,\dots,\omega'_{s}\}$.
By Corollary \ref{cor:1}, $\phi^{-1}(\w B_l')=\w B_l$ for some $l$-belt $B_l$ in $K$, and $MF(B_l)\cap MF(B_n)=\{\omega_1,\dots,\omega_{s}\}$.
Together with the fact that $s=\binom{l}{2}-2l+3$, we can obtain that there is exactly one vertex $v$ of $B_l$, which is not in $[n+1]$.
Thus $MF(\mathrm{link}_K(v))\cap MF(B_n)\neq\varnothing$, and we get a contradiction. Therefore $B_n'$ has to be the link of a vertex of $K'$.
\end{proof}

Since a belt which is the link of a vertex can uniquely determine a vertex of $K$, thus combining all the results above, we can construct a $1$-to-$1$ correspondence between the vertex sets of $K$ and $K'$, $\psi:\Ss\to\Ss'$, which is induced by the isomorphism
\[\phi:H^*(\ZZ_K;\kk)\cong H^*(\ZZ_{K'};\kk).\]

Next we show that
\begin{lem}\label{lem:4}
The vertex sets correspondence $\psi$ above is actually a simplicial map, and so a simplicial isomorphism.
\end{lem}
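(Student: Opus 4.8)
The plan is to reduce the lemma to the single claim that $\psi$ maps every edge of $K$ to an edge of $K'$, and then to finish by a dimension count. Since $\psi$ is a bijection of vertex sets, $K$ and $K'$ have the same number $m$ of vertices; and since both are flag, every missing face has two vertices, so $|MF(K)|=\mathrm{dim}_\kk H^3(\ZZ_K;\kk)=\mathrm{dim}_\kk H^3(\ZZ_{K'};\kk)=|MF(K')|$, whence $K$ and $K'$ also have the same number $\binom{m}{2}-|MF(K)|$ of edges. Thus a vertex bijection sending edges into edges is automatically an edge bijection, and then, both complexes being flag, a bijection of missing faces as well, i.e.\ a simplicial isomorphism.

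To move information between $K$ and $K'$ we would first record a divisibility criterion in $R=H^*(\ZZ_K;\kk)$: for a vertex $v$ of $K$ and $\omega\in MF(K)$, the class $\w\omega$ is a factor of $\overline{\mathrm{link}_K(v)}$ in $R$ if and only if $\omega\in MF(\mathrm{link}_K(v))$, i.e.\ $v$ is a common neighbor of the two vertices of $\omega$. The implication $\Leftarrow$ is already contained in the proof of Corollary \ref{cor:1} (the missing faces of a belt $B_n$ are factors of $\w B_n$); for $\Rightarrow$ one uses the $\mathbb Z^m$-multigrading of the Baskakov--Hochster ring: in a nonzero product $\w\omega\cdot u=\overline{\mathrm{link}_K(v)}\in\w H^1(\mathrm{link}_K(v))$ the relevant component of $u$ must lie in $\w H^0(K_J)$, where $J$ is the set of vertices of $\mathrm{link}_K(v)$ outside $\omega$, and this group is nonzero precisely when $\omega\in MF(\mathrm{link}_K(v))$. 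Applying the algebra isomorphism $\phi$, together with the facts that $\phi$ sends $\w\omega$ to a nonzero multiple of $\w{\omega'}$ for the induced bijection $\rho\colon MF(K)\to MF(K')$ (Corollary \ref{cor:1}) and sends $\overline{\mathrm{link}_K(v)}$ to a nonzero multiple of $\overline{\mathrm{link}_{K'}(\psi(v))}$ (Lemma \ref{lem:3}), we conclude that for each $\omega\in MF(K)$ the map $\psi$ carries the set of common neighbors in $K$ of the two vertices of $\omega$ bijectively onto the set of common neighbors in $K'$ of the two vertices of $\rho(\omega)$.

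The combinatorial input is that a flag $2$-sphere without $4$-belt contains no $4$-clique (dimension $2$ together with flagness) and no $4$-belt. From this: (i) a missing edge $\{a,b\}$ has at most two common neighbors, and any two of them are adjacent --- two non-adjacent common neighbors $p,q$ would make $K_{\{a,p,b,q\}}$ a $4$-belt, while three pairwise adjacent common neighbors $p,q,r$ would give the $4$-clique $\{a,p,q,r\}$; (ii) for every edge $\{x,y\}$ of $K$ the two $2$-simplices containing it are $\{x,y,a\}$ and $\{x,y,b\}$ with $\{a,b\}\notin K$ (else $\{a,b,x,y\}$ is a $4$-clique), so $\{a,b\}\in MF(K)$ and $x,y$ are common neighbors of $a$ and $b$, hence by (i) are precisely the common neighbors of $a$ and $b$. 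Consequently the rule $\{a,b\}\mapsto\{\text{common neighbors of }a,b\}$ is a bijection from the missing edges having exactly two common neighbors onto all edges of $K$, with inverse ``the pair opposite the edge in its two $2$-simplices''; the same holds in $K'$. Now take any edge $\{x,y\}$ of $K$ and let $\{a,b\}$ be its opposite pair. Then $\{a,b\}\in MF(K)$ has common neighbors exactly $\{x,y\}$, so by the previous paragraph $\rho(\{a,b\})\in MF(K')$ has common neighbors exactly $\{\psi(x),\psi(y)\}$; by (i) applied in $K'$ these two vertices are adjacent, i.e.\ $\{\psi(x),\psi(y)\}\in K'$. Hence $\psi$ sends edges to edges, and by the reduction in the first paragraph $\psi$ is a simplicial isomorphism.

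The steps we expect to be most delicate are the divisibility criterion --- one must check how the sign-corrected Baskakov--Hochster multiplication interacts with the multigrading so that ``$\w\omega$ divides $\overline{\mathrm{link}_K(v)}$'' is exactly the containment $\omega\in MF(\mathrm{link}_K(v))$ and nothing weaker --- and the bookkeeping of common neighbors in (i)--(ii), which is the precise point where the flag and no-$4$-belt hypotheses are used: to bound the common neighbors of a missing edge by two, and to force the pair opposite every edge to be a missing face.
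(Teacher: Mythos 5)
Your proof is correct and follows essentially the same route as the paper: reduce to showing $\psi$ sends edges to edges, use that the opposite pair of an edge is a missing face shared by the links of its two endpoints, transport this through $\phi$ via Corollary~\ref{cor:1} and Lemma~\ref{lem:3}, and invoke the no-$4$-belt hypothesis in $K'$ to force adjacency. The only differences are presentational: you make explicit the multigraded divisibility criterion (``$\w\omega$ divides $\overline{\mathrm{link}_K(v)}$ iff $\omega\in MF(\mathrm{link}_K(v))$'') that the paper uses implicitly, and you justify the final reduction by an edge count rather than by appealing to flagness directly.
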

\begin{proof}
We only need to verify that for each edge $e$ of $K$, $\psi(e)$ is an edge of $K'$ (since by flagness there is no $3$-belt in $K$ and $K'$).  An easy observation shows that if $(v_i,v_j)$ is an edge of $K$, then $B_{n_i}=\mathrm{link}_K(v_i)$ and $B_{n_j}=\mathrm{link}_K(v_j)$ have a common missing face $\omega$. Thus $\w\omega$ is a common factor of $\b B_{n_i}$ and $\b B_{n_j}$, and $\phi(\w\omega)$ is a common factor of
$\phi(\b B_{n_i})$ and $\phi(\b B_{n_j})$.
However if $\psi((v_i,v_j))=\{v_i',v_j'\}$ is a missing face of $K'$, then $MF(\mathrm{link}_{K'}(v_{i'}))\cap MF(\mathrm{link}_{K'}(v_{j'}))=\varnothing$  since
$K'$ has no $4$-belt (Otherwise if $\omega'$ is a common missing face then $K'_{\omega'\cup\{v_i',v_j'\}}$ is a $4$-belt). Set $B_{n_i}'=\mathrm{link}_{K'}(v_{i'})$ and $B_{n_j}'=\mathrm{link}_{K'}(v_{j'})$, then $\phi(\b B_{n_i})=\b B'_{n_i},\ \phi(\b B_{n_j})=\b B'_{n_j}$ by the definition of $\psi$. Note $\phi(\w\omega)=\w\omega'$ for some
$\omega'\in MF(K')$ by Corollary \ref{cor:1}, it follows from the fact $MF(B'_{n_i})\cap MF(B'_{n_j})=\varnothing$ that $\phi(\w\omega)$ is not a common factor of $\b B'_{n_i}$ and $\b B'_{n_j}$, a contradiction.
\end{proof}
\begin{proof}[Proof of Theorem \ref{thm:1}]
Since $K$ is a simplical $2$-sphere, $H^*(\ZZ_K)$ is torsion free, hence $H^*(\ZZ_K;\kk)\cong H^*(\ZZ_K)\otimes\kk$ for any field $\kk$.
It follows that $H^*(\ZZ_K;\kk)\cong H^*(\ZZ_{K'};\kk)$ for any field $\kk$. From \cite[Theorem 3.11 and Theorem 5.7]{FW}, a simplicial $2$-sphere $K$ is flag if and only if $\w {H}^*(\mathcal {Z}_{K})/([\mathcal {Z}_{K}])$ is a (nonzero) indecomposable ring. By \cite[Corollary 7.3]{FW}, for any simplicial complex $K$, there exists an element $\xi\in H^6(\ZZ_K;\kk)$ such that $\mathrm{ind}^3_R(\xi)=2$ if and only if there is a $4$-belt in $K$. Hence we finish the proof of Theorem \ref{thm:1} for the case $K'$ is a simplicial $2$-sphere.
\end{proof}
From the proof of this theorem we also obtain the following:
\begin{cor}
If $K$ is a flag $2$-sphere without $4$-belt, then any automorphism of $H^*(\ZZ_K)$ is induced by a combinatorial automorphism of $K$, i.e.,
\[Aut(H^*(K))\cong Aut(K).\]
\end{cor}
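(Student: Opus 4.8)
The plan is to identify the isomorphism with the natural homomorphism $\rho\colon\mathrm{Aut}(K)\to\mathrm{Aut}(H^*(\ZZ_K))$, $g\mapsto g_*$, where $g_*$ is the ring automorphism induced by functoriality of $\ZZ_{(-)}$ (equivalently, $g$ permutes the full subcomplexes $K_I$ and induces compatible isomorphisms $\w H^*(K_I)\xrightarrow{\sim}\w H^*(K_{g(I)})$ respecting the Baskakov--Hochster product). Since $H^*(\ZZ_K)$ is torsion free, as in the proof of Theorem \ref{thm:1} we may work over any field or over $\mathbb Z$ interchangeably. One must show that $\rho$ is a bijection.

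Surjectivity is where the work of this paper is reused. Given $\phi\in\mathrm{Aut}(H^*(\ZZ_K))$, run the argument of Theorem \ref{thm:1} with $K'=K$: Corollary \ref{cor:1} together with Lemmas \ref{lem:2}--\ref{lem:4} produce a simplicial automorphism $\psi=\Psi(\phi)$ of $K$, namely the vertex bijection determined by where $\phi$ sends the link-belt classes $\w{\mathrm{link}_K(v)}$. The assignment $\Psi$ is a group homomorphism, and $\Psi\circ\rho=\mathrm{id}_{\mathrm{Aut}(K)}$ because $\rho(g)$ sends $\w{\mathrm{link}_K(v)}\mapsto\pm\,\w{\mathrm{link}_K(g(v))}$. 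Thus $\rho$ is split injective, and it remains to prove $\rho$ is onto, i.e.\ that $\ker\Psi$ is trivial: any ring automorphism $\theta$ of $H^*(\ZZ_K)$ inducing the identity vertex map must be the identity.

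For this I would argue in three steps. First, $\theta$ fixes every missing-face class and every belt class up to a sign: by Corollary \ref{cor:1}, $\theta(\w\omega)=c\,\w{\omega'}$ with $\omega'\in MF(K)$, and one pins down $\omega'=\omega$ using that $\w\omega$ is a common factor of the belt classes of all belts through $\omega$, choosing two such belts that overlap only along $\omega$ — here the flag and no-$4$-belt hypotheses enter (e.g.\ two common neighbours of a non-edge are forced to be adjacent, else a $4$-belt appears); belt classes are then fixed up to sign because a belt is recovered from its missing-face set. Second, the multiplicative relation $\w\omega\cdot\zeta_\omega=\pm\w B$ expressing a belt class through each of its missing faces, combined with connectedness of $K$ (any two missing faces are linked by a chain of missing faces meeting consecutive belts), forces all these signs to agree, say $\theta(\w\omega)=\epsilon\,\w\omega$ with a single $\epsilon=\pm1$. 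Third — the delicate point — one propagates this normalization to all of $H^*(\ZZ_K)$: since $K$ is a $2$-sphere it is Gorenstein*, so by Theorem \ref{thm:3} $H^*(\ZZ_K)$ is a Poincar\'e algebra with fundamental class $[\ZZ_K]$, and comparing the action of $\theta$ on $[\ZZ_K]$ with its action on dual pairs of classes forces $\epsilon=1$ and $\theta=\mathrm{id}$.

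The main obstacle is precisely this last step, and the reason it cannot be avoided: $H^*(\ZZ_K)$ is not generated in low degrees — the ``disconnection'' classes living in $\bigoplus_I\w H^0(K_I)$ are multiplicatively indecomposable and occur in a wide range of degrees — so controlling $\theta$ on the missing-face and belt classes does not by itself determine $\theta$. Poincar\'e duality (via Theorem \ref{thm:3}) is the tool that carries the normalization from the low-degree part, where $\Psi$ gives direct control, up through the whole ring, and the sign bookkeeping must be done with the corrected Baskakov--Hochster signs of \cite[Theorem 2.12]{FW}. The remaining ingredients — the natural map $\rho$, its splitting by $\Psi$, and the combinatorial facts about common neighbours and minimally overlapping belts — are routine given the lemmas already established.
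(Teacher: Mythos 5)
Your reduction to showing $\ker\Psi=\{\mathrm{id}\}$ is correct, and you have rightly flagged this as the delicate point; unfortunately that statement is false, so steps~2 and~3 of your plan cannot be carried out. Consider the ``diagonal'' automorphisms: for any $\epsilon\colon[m]\to\{\pm1\}$ set $\epsilon(I)=\prod_{v\in I}\epsilon(v)$ and let $\theta_\epsilon$ act on the Hochster summand $\w H^*(K_I)\subset H^*(\ZZ_K)$ by the scalar $\epsilon(I)$. Since the Baskakov product maps $\w H^*(K_I)\otimes\w H^*(K_J)$ into $\w H^*(K_{I\cup J})$ only when $I\cap J=\varnothing$, where $\epsilon(I\cup J)=\epsilon(I)\epsilon(J)$, each $\theta_\epsilon$ is a graded ring automorphism. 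It sends every belt class to $\pm$ itself, so $\theta_\epsilon\in\ker\Psi$; yet $\theta_\epsilon\neq\mathrm{id}$ for every nonconstant $\epsilon$ (some missing-face class $\w\omega$ has $\epsilon(\omega)=-1$), and even for $\epsilon\equiv-1$ it acts by $-1$ on every $5$-belt class. These $\theta_\epsilon$ already defeat your step~2: the signs on the $\w\omega$ need not agree, and the relation $\w\omega\cdot\zeta_\omega=\pm\w B$ imposes no constraint because $\zeta_\omega$ rescales as well. They also defeat your step~3: $\theta_\epsilon$ is perfectly compatible with Poincar\'e duality, multiplying the fundamental class and every dual pair $\alpha\in\w H^*(K_I)$, $\beta\in\w H^*(K_{[m]\setminus I})$ by the same factor $\epsilon([m])=\epsilon(I)\epsilon([m]\setminus I)$.

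Consequently $\ker\Psi$ contains (essentially) a copy of $(\mathbb Z/2)^m$, $\rho$ is not surjective, and $\mathrm{Aut}(H^*(\ZZ_K))$ is strictly larger than $\mathrm{Aut}(K)$. What the proof of Theorem~\ref{thm:1} actually yields, as you noted, is only the split surjection $\Psi\colon\mathrm{Aut}(H^*(\ZZ_K))\twoheadrightarrow\mathrm{Aut}(K)$, so $\mathrm{Aut}(K)$ is a retract (quotient) of $\mathrm{Aut}(H^*(\ZZ_K))$, not the whole group. The paper offers no argument beyond ``from the proof of this theorem we also obtain the following,'' and that one-line justification glosses over exactly the point you identified as the obstacle; your instinct that it could not be avoided was right, but the conclusion goes the other way.
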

The general case that $K'$ is an arbitrary simplicial complex in Theorem \ref{thm:1} can be solved, once we prove the following:
\begin{thm}\label{thm:2}
Let $K$ be a simplical $2$-sphere, $K'$ an arbitrary simplicial complex. If there is a graded isomorphism:
\[H^*(\ZZ_K)\cong H^*(\ZZ_{K'}),\]
then $K'$ is also a simplicial $2$-sphere.
\end{thm}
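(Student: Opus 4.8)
The plan is to show first that $K'$ must itself be Gorenstein*, then to recover $\dim K'=2$ from numerical invariants of the ring $H^*(\ZZ_{K'})$, and finally to use that a Gorenstein* complex of dimension $2$ is automatically a simplicial $2$-sphere. For the transfer of the Gorenstein* property: since $K$ is a simplicial $2$-sphere it is Gorenstein* over every field $\kk$, so by Theorem~\ref{thm:3} the algebra $H^*(\ZZ_K;\kk)\cong\mathrm{Tor}_{\kk[m]}(\kk(K),\kk)$ is a Poincar\'e algebra for every $\kk$; moreover $H^*(\ZZ_K)$ is torsion free (as already used in the proof of Theorem~\ref{thm:1}, since every full subcomplex of $S^2$ has free integral homology). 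Hence $H^*(\ZZ_{K'})\cong H^*(\ZZ_K)$ is torsion free too, so $H^*(\ZZ_{K'};\kk)\cong H^*(\ZZ_{K'})\otimes\kk\cong H^*(\ZZ_K;\kk)$ is a Poincar\'e algebra for every field $\kk$. Being a Poincar\'e algebra is an isomorphism invariant, so Theorem~\ref{thm:3} applied in the reverse direction shows $K'$ is Gorenstein* over every $\kk$, hence Gorenstein*. Writing $m'$ for the number of vertices of $K'$ and $n'-1$ for its dimension (there are no ghost vertices, as $H^1(\ZZ_{K'})\cong H^1(\ZZ_K)=0$), the Gorenstein* property makes $\ZZ_{K'}$ a Poincar\'e duality space whose unique top nonzero cohomology group lies in degree $m'+n'$; comparison with $H^*(\ZZ_K)$, whose top degree is $m+3$, gives $m'+n'=m+3$.

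Next I would pin down the dimension. By the Hochster decomposition, $\dim_\kk H^3(\ZZ_{K'})$ equals the number of non-edges of $K'$, namely $\binom{m'}{2}-f_1(K')$, while $\dim_\kk H^3(\ZZ_K)=\binom{m}{2}-(3m-6)=\binom{m-3}{2}$ because a $2$-sphere satisfies $f_1=3f_0-6$. Hence $f_1(K')=\binom{m'}{2}-\binom{m-3}{2}$. On the other hand $K'$ is Cohen--Macaulay, so $h_2(K')\ge0$, which is precisely $f_1(K')\ge(n'-1)m'-\binom{n'}{2}$. Feeding in $m'=m+3-n'$ and using the identity $\binom{m'}{2}-(n'-1)m'+\binom{n'}{2}=\binom{m'-n'+1}{2}$, these two facts combine to $\binom{m+4-2n'}{2}\ge\binom{m-3}{2}$, which forces $n'\le3$; the cases $n'=1,2$ are then excluded at once ($n'=1$ would force $m=0$, and for $n'=2$ the complex $K'$ is a cycle with $f_1(K')=m'$, incompatible with the value $4m-6$ required above). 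Thus $n'=3$, i.e.\ $\dim K'=2$. (The small case $m=4$ is immediate in any event: then $K=\partial\Delta^3$ and $\ZZ_K\simeq S^7$, and a Gorenstein* complex whose moment-angle complex has the cohomology of $S^7$ must be $\partial\Delta^3$.)

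It then remains to identify the surface. A Gorenstein* complex $K'$ of dimension $2$ is pure; the link of each edge is a $0$-dimensional Gorenstein* complex, hence exactly two points, so each edge lies in exactly two triangles; and the link of each vertex is a $1$-dimensional Gorenstein* complex, hence a cycle. Therefore $|K'|$ is a closed connected surface, and since $\w H_*(K';\kk)\cong\w H_*(S^2;\kk)$ for every $\kk$ (and there is no torsion), the classification of closed surfaces gives $|K'|\cong S^2$, so $K'$ is a simplicial $2$-sphere.

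I expect the middle step to be the main obstacle. A graded ring isomorphism retains only a handful of numerical invariants of $H^*(\ZZ_{K'})$ — essentially the top degree $m'+n'$ and $\dim_\kk H^3=\binom{m'}{2}-f_1(K')$ — and the dimension has to be squeezed out of these together with the Cohen--Macaulay inequality $h_2\ge0$. The delicate point is that Gorenstein* complexes of dimension $\ge3$ need not be spheres, so they must be ruled out by this numerology rather than by any sphere-recognition statement; the first and third steps, by contrast, are formal once the torsion-freeness of $H^*(\ZZ_K)$ is available, respectively standard surface topology.
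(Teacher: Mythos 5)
Your proof is correct, and it reaches the conclusion along a genuinely different path than the paper at the key numerical step. Both arguments begin identically: torsion-freeness of $H^*(\ZZ_K)$ lets the ring isomorphism pass to every field $\kk$, and Theorem~\ref{thm:3} (Avramov--Golod) then transfers the Gorenstein* property to $K'$. The paper then invokes the generalized Lower Bound Theorem (Theorem~\ref{thm:4}), stated as $f_1\ge mn-\binom{n+1}{2}$ for an $(n-1)$-dimensional Gorenstein* complex, together with the argument of \cite[Theorem~6.10]{FW}. You instead use only the Cohen--Macaulay nonnegativity $h_2\ge0$, i.e.\ $f_1\ge(n'-1)m'-\binom{n'}{2}$, which is strictly weaker than the generalized LBT (the gap is $m'-n'\ge0$) but, as you check, still squeezes out $\binom{m+4-2n'}{2}\ge\binom{m-3}{2}$ and hence $n'\le3$; the cases $n'=1,2$ and the edge case $m=4$ are then eliminated by hand. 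This is a real simplification: Stanley's $h_i\ge0$ for Cohen--Macaulay complexes is textbook material, whereas Theorem~\ref{thm:4} is announced as forthcoming work, so your route makes the theorem essentially self-contained. Your final step (links of vertices are cycles, links of edges are two points, so $|K'|$ is a closed surface, and $\w H^*(K';\kk)\cong\w H^*(S^2;\kk)$ over all $\kk$ forces $|K'|\cong S^2$) spells out the sphere-recognition that the paper delegates to \cite[Theorem~6.10]{FW}, and it is correct. The computations you rely on ($m'+n'=m+3$ from the top degree of the Poincar\'e algebra, $\dim H^3$ counting non-edges so $f_1(K')=\binom{m'}{2}-\binom{m-3}{2}$, and the identity $\binom{m'}{2}-(n'-1)m'+\binom{n'}{2}=\binom{m'-n'+1}{2}$) all check out.
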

To prove this, we need a generalization of the famous  \emph{Lower Bound Theorem} in combinatorial theory,  which was first proved by Barnette \cite{B73} for simplicial polytopes
\begin{LBT}[{\cite[Barnette]{B73}}]
Let $P$ be a simplicial $n$-polytope with $m$ vertices, and let $f_1$ be the number of edges of $P$. Then $f_1\geq mn-\binom{n+1}{2}$.
\end{LBT}
Actually by using some results about Gorenstein* complexes and following the line of Barnette's proof, we can generalize this result to (see this in a forthcoming paper on Generalized Lower Bound Theorem by F.~Fan)
\begin{thm}\label{thm:4}
Let $K$ be a $(n-1)$ dimensional Gorenstein* complexes with $m$ vertices, and let $f_1$ be the number of edges of $K$. Then $f_1\geq mn-\binom{n+1}{2}$.
\end{thm}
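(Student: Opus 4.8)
The plan is to translate the edge count into a statement about the $h$-vector and then deduce it from generic infinitesimal rigidity of the $1$-skeleton; this is the route of Kalai's rigidity proof of the classical Lower Bound Theorem, and it goes through for Gorenstein* complexes with only mild changes. Writing $(h_0,\dots,h_n)$ for the $h$-vector of $K$, the defining relations give $h_1=f_0-n=m-n$ and $h_2=f_1-(n-1)m+\binom n2$, so a short computation shows that $f_1\ge mn-\binom{n+1}2$ is equivalent to $h_2\ge h_1$. (One should assume $n\ge 3$ here; for $n\le 2$ the stated bound already fails on $\partial\Delta^{n}$, and $n\ge 3$ is the only case needed in this paper.) So it suffices to prove $h_2-h_1\ge 0$.

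I would obtain this from rigidity as follows. For a generic $p\colon[m]\to\mathbb R^{n}$, the rigidity matrix of the framework has $f_1$ rows (one per edge of $K$) and $nm$ columns; as $K$ has at least $n+1$ vertices, its image affinely spans $\mathbb R^{n}$, so the rank is at most $nm-\binom{n+1}2$, with equality precisely when the framework is infinitesimally rigid. When that happens, the space of self-stresses has dimension $f_1-nm+\binom{n+1}2=h_2-h_1$, which is therefore $\ge 0$; and in any case $f_1$ is at least this rank, i.e.\ $f_1\ge nm-\binom{n+1}2$. Hence the theorem follows from the claim that the $1$-skeleton $G(K)$ of an $(n-1)$-dimensional Gorenstein* complex, $n\ge 3$, is generically $n$-rigid.

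I would prove this claim by induction on $n$, using only two standard facts about Gorenstein* complexes: the link of a vertex of $K$ is again Gorenstein*, of dimension $n-2$; and $K$ is a normal pseudomanifold, so every ridge lies in exactly two facets (its link being $S^{0}$) and the facet--ridge dual graph is connected. The base case $n=3$ is classical: a $2$-dimensional Gorenstein* complex is a triangulation of $S^{2}$, whose $1$-skeleton is $3$-connected and planar, hence generically $3$-rigid by the Cauchy--Dehn--Alexandrov--Gluck theorem. For the inductive step, by the hypothesis $G(\mathrm{link}_{K}v)$ is generically $(n-1)$-rigid for each vertex $v$, so Whiteley's cone lemma makes $G(\mathrm{star}_{K}v)=v\ast G(\mathrm{link}_{K}v)$ generically $n$-rigid. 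Since $G(K)=\bigcup_{v}G(\mathrm{star}_{K}v)$, it remains to glue the stars: using strong connectivity I would order the vertices $v_1,\dots,v_m$ so that each $v_{j+1}$ lies in a facet $F$ with $F\subseteq\{v_1,\dots,v_{j+1}\}$ (such an order exists because, while vertices remain unused, a path in the dual graph produces a facet meeting the already-used set in a ridge, hence having exactly one new vertex); then $\bigl(\bigcup_{i\le j}G(\mathrm{star}_{K}v_i)\bigr)\cap G(\mathrm{star}_{K}v_{j+1})$ contains the $n$ vertices of $F$, so the gluing lemma (a union of two generically $n$-rigid graphs sharing at least $n$ vertices is generically $n$-rigid) keeps the running union generically $n$-rigid.

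The hard part will be this assembly step, precisely because Gorenstein* complexes need not be shellable: the vertex order has to be extracted from strong connectivity alone, and one must check carefully that the running graph is contained in the $1$-skeleton of the induced subcomplex and that successive overlaps really meet the hypotheses of the gluing lemma. An alternative that sidesteps the ordering is to invoke Fogelsanger's theorem on the generic rigidity of the graphs of minimal cycles, once one checks that the fundamental cycle of a Gorenstein* complex is minimal; in either approach the only ingredient beyond classical rigidity theory is the normal-pseudomanifold structure of Gorenstein* complexes together with the fact that vertex links are Gorenstein*.
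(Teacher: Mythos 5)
Your proposal is correct in outline but takes a genuinely different route from the one the paper indicates. The paper supplies no proof of this theorem at all: it only remarks that the result follows ``by using some results about Gorenstein* complexes and following the line of Barnette's proof'' and defers to a forthcoming paper, so the intended argument is a combinatorial one in the spirit of Barnette's original attack on the Lower Bound Conjecture. Your route is instead the Kalai--Fogelsanger rigidity route: translate $f_1\ge mn-\binom{n+1}{2}$ into $h_2\ge h_1$ and derive that from generic $n$-rigidity of the $1$-skeleton. That is a valid and arguably more structural proof here, since Gorenstein* complexes of dimension $n-1\ge 2$ are normal pseudomanifolds whose vertex links are again Gorenstein*, so either your inductive cone-and-glue argument or a direct appeal to Fogelsanger's theorem on minimal cycle complexes gives the rigidity. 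Your inductive assembly step is in fact sound as you sketch it: strong connectivity (available because Gorenstein* implies Cohen--Macaulay) produces the facet ordering, and at each stage the new star shares all $n$ vertices of a facet with the running union, so the standard gluing lemma for generically $n$-rigid graphs with $\ge n$ common vertices applies; the overlap vertices land in the running union because the facet's old vertices are already present and the new vertex is adjacent to them. Two small remarks. First, your claimed counterexample for $n\le 2$ is off: on $\partial\Delta^{n}$ the bound holds with equality for every $n$; what fails for $n=2$ are cycles with at least $4$ vertices (and for $n=1$ the pair of points). Second, you are right that the theorem as printed omits the hypothesis $n\ge 3$, which the rigidity argument needs and which the paper's application (to $2$-spheres, $n=3$) silently assumes; flagging that is a useful catch. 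In short: same theorem, different and legitimate proof strategy --- rigidity rather than Barnette-style counting --- with the Fogelsanger citation being the safest way to close the one step you label as hard.
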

\begin{proof}[Proof of Theorem \ref{thm:2}]
According to Theorem \ref{thm:3}, $H^*(\ZZ_{K};\kk)\cong \mathrm{Tor}_{\kk[m]}(\kk(K),\kk)$ is a Poincar\'e algebra for any field $\kk$.
Since $H^*(\ZZ_K)\cong H^*(\ZZ_{K'})$, the argument in the proof of Theorem \ref{thm:1} shows that $H^*(\ZZ_K;\kk)\cong H^*(\ZZ_{K'};\kk)$ for any field $\kk$. Therefore $K'$ is also a Gorenstein* complex.
By using Theorem \ref{thm:4} and applying the same argument in the proof of \cite[Theorem 6.10]{FW}, we get that $K'$ is a simplical $2$-sphere.
\end{proof}
Now we give an application of Theorem \ref{thm:1} in combinatorial mathematics, which is related with chemistry and material science (cf. \cite{BE15} for more studies about this).
\begin{Def}
A (mathematical) fullerene is a simple 3-polytope with all facets pentagons and hexagons.
\end{Def}
\begin{thm}[{\cite[Buchstaber-Erokhovets]{BE15}}]
The dual simplicial $2$-sphere of any fullerene is flag and has no $4$-belt.
\end{thm}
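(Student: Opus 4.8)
The plan is to translate the statement into a purely combinatorial assertion about the dual complex and then run a combinatorial Gauss--Bonnet (discharging) argument. Write $P$ for the fullerene and $K=P^{*}$ for its dual simplicial $2$-sphere: vertices of $K$ are the facets of $P$, edges of $K$ are pairs of adjacent facets, and triangles of $K$ are triples of facets meeting at a common vertex of $P$. Since $P$ is simple, the link of every vertex of $K$ is a cycle whose length equals the number of edges of the corresponding facet, so the hypothesis on $P$ is exactly that $K$ is a simplicial $2$-sphere all of whose vertices have degree $5$ or $6$. Under this dictionary ``$K$ is flag'' is equivalent to ``$K$ has no empty triangle'' — a $3$-subset of pairwise adjacent vertices not spanning a face, which is the only kind of minimal non-face of size $\geq 3$ a simplicial $2$-sphere on more than four vertices can have — and ``$K$ has a $4$-belt'' is equivalent to ``$K$ has a chordless $4$-cycle'': a chordless $k$-cycle in a triangulated $2$-sphere automatically bounds two disks each with at least one interior vertex, so its full subcomplex is an $S^{1}$, and conversely a $4$-belt is such a cycle. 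So the task reduces to showing: a simplicial $2$-sphere with all vertex degrees in $\{5,6\}$ has no empty triangle and no chordless $4$-cycle.

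Next I would set up the bookkeeping. For any triangulated disk $D$ with boundary cycle of length $\ell$, Euler's formula together with the count $3f=2e-\ell$ for its faces and edges gives the identity
\[
\sum_{v\ \mathrm{interior}}(6-\deg_{D}v)\;+\;\sum_{v\in\partial D}(4-\deg_{D}v)\;=\;6 .
\]
Applied to $K$ itself this yields $\sum_{v}(6-\deg_{K}v)=12$, so $K$ has exactly $12$ vertices of degree $5$ and all others of degree $6$ (the ``$12$ pentagons''). Applied to a disk with triangular boundary whose interior vertices all have degree $\ge 5$ and whose three boundary vertices all have $D$-degree $\ge 3$, it forces that disk to contain at least three interior degree-$5$ vertices; a disk with a chordless $4$-cycle as boundary and the analogous degree conditions must contain at least two such vertices.

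Now suppose $T=\{v_{1},v_{2},v_{3}\}$ is an empty triangle. It bounds two disks $D_{1},D_{2}$ in $|K|=S^{2}$; because $T$ does not span a face, no $v_{i}$ can have $D_{j}$-degree $2$ (that would place the triangle $v_{1}v_{2}v_{3}$ inside $D_{j}$), so each $D_{j}$ meets the hypotheses above and contains at least three interior degree-$5$ vertices. Combining this with $\deg_{K}v_{i}=\deg_{D_{1}}v_{i}+\deg_{D_{2}}v_{i}-2\le 6$ and the two Gauss--Bonnet equalities pins down the degrees of $v_{1},v_{2},v_{3}$ and the number of degree-$5$ vertices inside each $D_{j}$ — but this alone is still consistent with the existence of $T$, and the same is true for a chordless $4$-cycle. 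To finish one must analyse the local structure: working outward ring by ring from $T$ inside $D_{1}$, the fact that every vertex has degree at most $6$ (``combinatorial curvature is nonnegative and is concentrated at only $12$ points'') forces each successive ring to close up in a very restricted way, and a finite case check shows the rings cannot consistently surround a triangular, respectively chordless quadrilateral, hole.

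I expect this structural/discharging step to be the main obstacle: the Euler-characteristic counting by itself does not rule out the forbidden patches, so the scarcity of degree-$5$ vertices has to be exploited through the local combinatorics rather than through a global count. (Equivalently, one may invoke the known theorem that fullerene graphs are cyclically $5$-edge-connected, which is precisely the assertion that $P$ has no prismatic $3$- or $4$-circuit, i.e.\ that $K$ has no empty triangle and no $4$-belt.) Running the same scheme with a chordless $4$-cycle in place of $T$ gives the ``no $4$-belt'' conclusion, and together these show that $K$ is a flag $2$-sphere without a $4$-belt.
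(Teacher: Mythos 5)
The paper does not prove this statement; it is quoted from Buchstaber--Erokhovets \cite{BE15} and used as a black box, so there is no in-paper proof to compare against. Your translation into the dual picture is sound: the dual of a fullerene is a simplicial $2$-sphere in which every vertex has degree $5$ or $6$, flagness is the absence of empty triangles, and a $4$-belt is a chordless $4$-cycle. The combinatorial Gauss--Bonnet bookkeeping is also correctly set up, including the degree-$\geq 3$ constraint on boundary vertices (an empty triangle or a chordless $4$-cycle cannot have a boundary vertex of disk-degree $2$, as that would force a chord or fill the triangle).

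However, the argument has a genuine gap, and it is exactly where you flag it. The counting identity
\[
\sum_{v\ \mathrm{interior}}(6-\deg_{D}v)+\sum_{v\in\partial D}(4-\deg_{D}v)=6
\]
applied to both sides of the forbidden cycle is, as you note, self-consistent with the global total of $12$ degree-$5$ vertices and yields no contradiction by itself. The ``ring-by-ring'' local analysis that is supposed to finish the job is only asserted, not carried out, and that case analysis is the entire content of the theorem. As written, the direct proof is incomplete.

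The parenthetical fallback --- citing cyclic $5$-edge-connectedness of fullerene graphs --- would indeed close the argument: in a cubic planar graph a non-trivial edge-cut of size $3$ or $4$ is precisely a cyclic $3$- or $4$-edge-cut, which in the dual corresponds exactly to an empty triangle or a $4$-belt. But that connectivity result is itself a nontrivial theorem (Do\v{s}li\'c) essentially equivalent to what you are asked to prove, so invoking it replaces the gap rather than fills it. To have a self-contained proof you should either carry out the ring-by-ring case check in full, or openly structure the proof as a reduction to the cyclic $5$-edge-connectivity theorem with a reference.
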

From Theorem \ref{thm:1}, we immediately get the following
\begin{cor}
Every fullerene is $B$-rigid.
\end{cor}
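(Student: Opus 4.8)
The plan is to combine Theorem~\ref{thm:1} with the Buchstaber--Erokhovets theorem stated just above. Let $P$ be a fullerene and let $K_P$ denote its dual simplicial $2$-sphere, so that the moment-angle manifold attached to $P$ is $\ZZ_P=\ZZ_{K_P}$. By the Buchstaber--Erokhovets theorem, $K_P$ is a flag $2$-sphere without $4$-belt, which is exactly the hypothesis of Theorem~\ref{thm:1}. So the first step is simply to record that Theorem~\ref{thm:1} is applicable with $K=K_P$.

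Next I would unwind the definition of $B$-rigidity. Suppose $P'$ is a simple $3$-polytope (for instance another fullerene) admitting a graded ring isomorphism $H^*(\ZZ_{P'})\cong H^*(\ZZ_P)$; writing $K_{P'}$ for the dual simplicial sphere of $P'$, this reads $H^*(\ZZ_{K_{P'}})\cong H^*(\ZZ_{K_P})$ as graded rings. Theorem~\ref{thm:1} then forces a combinatorial equivalence $K_{P'}\approx K_P$, and since a simple polytope is combinatorially determined by its dual simplicial sphere, this gives $P'\approx P$. Hence $P$ is $B$-rigid. Because Theorem~\ref{thm:1} is stated with integral coefficients (and a simplicial $2$-sphere has torsion-free $H^*(\ZZ)$, so passing to $\mathbb{Z}$ costs nothing), the conclusion holds in the strong, $\mathbb{Z}$-coefficient sense.

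There is essentially no obstacle here: the corollary is a direct specialization of Theorem~\ref{thm:1}, and the work has all been done in the lemmas and theorems above. The only points needing a word of care are the standard dictionary between the combinatorics of a simple polytope and that of its dual simplicial sphere, and the remark that $B$-rigidity of the complex $K_P$ is precisely what the phrase ``$P$ is $B$-rigid'' is meant to encode. One may additionally note that Theorem~\ref{thm:1} actually yields something a priori stronger: any simplicial complex $K'$, not assumed polytopal nor the dual of a fullerene, with $H^*(\ZZ_{K'})\cong H^*(\ZZ_{K_P})$ is already forced to be combinatorially equivalent to $K_P$.
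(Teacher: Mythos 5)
Your argument is exactly the intended one: the paper deduces the corollary immediately from Theorem~\ref{thm:1} together with the Buchstaber--Erokhovets theorem that the dual $2$-sphere of a fullerene is flag with no $4$-belt, and your write-up simply spells out the (standard) dictionary between a simple polytope and its dual simplicial sphere. Correct, and the same route as the paper.
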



\end{document}